\newtheorem{theorem}{Theorem}[section]
\newtheorem{corollary}[theorem]{Corollary}
\newtheorem{definition}[theorem]{Definition}
\newtheorem{proposition}[theorem]{Proposition}
\newtheorem{remark}[theorem]{Remark}
\newenvironment{proof}[1][Proof]{\textbf{#1.} }{\ \rule{0.5em}{0.5em}}
\def\O{\Omega}
\def\na{\nabla}
\def\dpa{\partial}
\def\D{\Delta}
\newcommand{\cqfd}
\def\div{\mbox{div}}
\def\disp{\displaystyle}
\newtheorem{propo}{Proposition}[section]
\title{Shape stability of  a quadrature surface problem in infinite Riemannian manifolds}
\author{Ababacar Sadikhe DJITE$^{1,\,}$\footnote{ababacarsadikhe.djite@ucad.edu.sn} ,  Diaraf SECK$^{1,\,}$ \footnote{diaraf.seck@ucad.edu.sn}\\\\
$^{1}$ Laboratoire des Math\'ematiques de la D\'ecision et \\
d'Analyse Num\'erique, BP 16889, Dakar Fann, S\'enegal\\
Ecole Doctorale de Math\'ematiques et Informatique U.C.A.D. Dakar,  S\'en\'egal.}
\begin{document}
\maketitle
\begin{abstract}
In this paper, we give a simple control on how an optimal shape can be characterized. The framework of Riemannian manifold of infinite dimension is essential. And the covariant derivative plays a key role in the computation and in the analysis of qualitative properties from the shape hessian. The control depends only on the mean curvature of the domain which is a minimum or  a critical point.
\end{abstract}
{\bf Keywords:} Stability, quadrature surface, shape optimization, Riemannian manifold\\
{\bf Mathematical classification subject}: 49Q10, 53B20
\section{Introduction}
The search for the notion of  quadratures made a prodigious leap forward (1669-1704) thanks to Leibniz and Newton who, with the infinitesimal calculus, made the link between quadrature and derivative.
A brief remind could be interesting to see this  link with shape optimization. Regarding a bounded domain of $\Omega \subset \mathbb{R}^N$ with regular boundary, for instance $\mathcal C^2,$ $\mu$ a signed measure  compactly supported in $\Omega, $ it is well known there is a measure $\sigma$ called a balayage measure carried by the surface $\partial \Omega$ and having the same potential as $\mu$ outside $\bar \Omega,$ see for instance \cite{Kel}, \cite{Lan} for more details about this topic. And in this case, by classical approximation technique, one has the following relation:
\begin{eqnarray}\label{QS}
\int_{\partial \Omega} h d\sigma= \langle h, \mu\rangle \qquad  \forall h \in \mathcal H (\bar \Omega)
\end{eqnarray}
where $\mathcal H (\bar \Omega)$ denotes the set of functions harmonic on a neighborhood of $\bar \Omega.$ And we say that $\partial \Omega$ is a quadrature surface with respect to $\mu$ if  ($\ref{QS}$) is satisfied.\\
 This notion is closely linked with the overdetermined Cauchy elliptic  problem. And one can  claim that $\partial \Omega$ is a quadrature surface if and only if there is a solution to the following overdetermined Cauchy problem 
 \begin{equation}\label{FBP1}
\left\{
\begin{array}{ccc} 
-\Delta  u_{\Omega}&=& \mu\quad  \mbox{in}\quad \Omega\\
u_{\Omega}&=&0 \quad \mbox{on }\quad \partial\Omega  \\
-\frac{\partial u_{\Omega}}{\partial \vec\nu}&= &1 \quad \mbox{on}\quad \partial\Omega 
\end{array}
\right.
\end{equation}
The  above quadrature surface free boundary problem has some  physical motivations and   can be related  to  many areas such as  free streamlines, jets, Hele-show flows, electromagnetic shaping, gravitational problems etc. It has been intensively studied at least during the last forty years, see for example \cite{Shap},  \cite{Ef et al} and the references contained in these books for more details. Among these works, some authors have established an intimate link between the existence of quadrature surfaces and the solution of free boundary problems governed by overdetermined partial differential equations, see for instance \cite{He}, \cite{Sha1}  \cite{Sha2},  \cite{DS1} and references therin.\\
The quadrature surface problem $(\ref{FBP1})$ can be tackle by  a shape  optimization approach when $\mu$ is regular enough, for instance by taking it in $L^2 (\Omega), supp (\mu)\subset \Omega$.The authors invite the readers who get interest on  details to see for instance \cite{BLS} and \cite{DS1}.\\
Before proceeding further, let us remind that in optimisation or in the study of minimal action, one of the essential questions is the characterization of an optimum if it exists. When one is in a differentiable environment, i.e. if the objective function is differentiable as well as its constraints if any, the first derivative  and second one (hessian) play a fundamental role. In finite dimension, the characterization results are very well known even when we are in Banach spaces.\\
 On the other hand, when we have to deal with admissible sets of regular openings of $\mathbb{R}^N, N\geq 2$ containing the optimum to be characterized, the question is to be treated in a more delicate way. Indeed, if we consider a shape optimization problem where the variable is a regular open of class $\mathcal C^2$ and in which a boundary value  problem of partial differential equations is posed, there is the computation of the second derivative. Added to this, the equivalence of norms is to be handled if any exist. 
In this paper, we aim at studying these issues of characterization of critical or optimal domains in the case where the minimum of the considered shape  functional  exists,  in infinite dimensional Riemannian structures. To do so, it is crucial to find form spaces and associated metrics.\\

Finding a shape space and an associated metric is a challenging task and different approaches lead to various models. One possible approach is to  do as in   \cite{mm1} \cite{mm2}. These authors proposed, a survey of various suitable inner products is given, e.g., the curvature weighted metric and the Sobolev metric. 
There are various types of metrics on shape spaces, e.g., inner metrics \cite{BHM} \cite{mm2} like the Sobolev metrics, outer metrics \cite{BMTY}, \cite{K}, \cite{mm2}[6, 31, 43], metamorphosis metrics\cite{HTY} [27, 60] \cite{TY}, the Wasserstein or Monge-Kantorovic metric on the shape space of probability measures \cite{AGS},\cite{BB} [2, 7], the Weil-Petersson metric\cite{Ku} [35], current metrics \cite{DPTA}[16] and metrics based on elastic deformations \cite{FJSY}\cite{WR}[18, 64]. However, it is a challenging task to model both, the shape space and the associated metric. There does not exist a common shape space or shape metric suitable for all applications. Different approaches lead to diverse models. The suitability of an approach depends on the requirements in a given situation.


In recent work, it has been shown that PDE constrained shape optimization problems can be embedded in the framework of optimization on shape spaces. E.g., in \cite{SSW}, shape optimization is considered as optimization on a Riemannian shape manifold, the manifold of smooth shapes. Moreover, an inner product, which is called Steklov- Poincar\'e  metric, for the application of finite element (FE) methods is proposed in \cite{SSW1}.\\

As pointed out in \cite{Schu}, shape optimization can be viewed as optimization on Riemannian shape manifolds and the resulting optimization methods can be constructed and analyzed within this framework. This combines algorithmic ideas from \cite{abetal}  with the Riemannian geometrical point of view established in \cite{BHM}\\
In \cite{mm1} \cite{mm2}[24,25], a geometric structure of two-dimensional $\mathcal C ^{\infty}$ shapes was introduced and subsequently generalized to shapes in higher dimensions in \cite{mm3} \cite{BHM}\cite{BHM1}[5,6,26]. Essentially, closed curves (and closed higher-dimensional surfaces) are identified with mappings of the unit sphere to any shape under consideration. In two dimensions, this can be naturally motivated by the Riemannian mapping theorem. In this work, we focus on two-dimensional shapes as subsets of $\mathbb{R}^2.$ 
And  considering \cite{BLS}, \cite{DS1}, we think that it is possible  to write our work in high dimensions and even if $\Omega$ is an open set  with boundary of a compact  $N-$dimensional Riemannian noted $\mathcal M.$\\
One of our main question is the following:\\
Does it possible  to  express the Hessian of a shape  functional  to get sufficient conditions so that the critical domain of  the functional $J$ should its minimum?   
To answer  this  question, we study the positiveness  of the  quadratic form of the functional  $J$  which is  related to
the quadrature surface that is never but the following   free boundary problem
\begin{eqnarray*}
\begin{cases}
-\Delta u_{\Omega}= f \; \; \mbox{in}\, \, \Omega \\
u_{\Omega}=0 \; \; \mbox{on}\, \,\partial  \Omega \\
- \frac{\partial u_{\Omega}}{\partial \vec{ \nu}} = k \; \; \mbox{on}\, \, \partial \Omega 
\end{cases}
\end{eqnarray*} 
$k$ is a positive constant, and $f\in L^2 (\Omega), suppf \subset \Omega, \  \vec{\nu}$ is the exterior unit normal vector.
The above quadrature surface can be formulated as  the following shape  optimization  problem:
$$
\min_{\Omega \subset \mathbb{R}^{2}} J(\Omega)
$$
under the following partial differential equations contraints
\begin{eqnarray*}
\begin{cases}
-\Delta u_{\Omega}= f  \; \; \mbox{in}\, \, \Omega \\
u_{\Omega}=0 \; \; \mbox{on}\, \, \partial \Omega \\
\end{cases}
\end{eqnarray*}
where 
\begin{eqnarray}\label{ShaFunc}
J(\Omega)=-\frac{1}{2}\int_{\Omega}|\nabla u_{\Omega}|^{2}dx+\frac{k^{2}}{2}|\Omega|
\end{eqnarray}
is a real valued shape differentiable objective function, where $|\Omega|= \displaystyle \int_{\Omega} dx.$
\\ 
In \cite{BLS}, \cite{DS1}, there are all details on existence results of quadrature surface  by using  shape optimization tools. 

 And the second aim  question is  the problem  computation of the Hessian  in the  infinite Riemannian framework  and  how  it can be related to the  second shape derivative to deduce qualitative properties when the  minimum of  a regular enough shape  functional exists or when $\Omega$ is a critical point that is to say that the first derivative of $J(\Omega)$ is equal to zero.\\


The paper is organized as follows:\\
 In section $2,$  we give a brief survey, based on works in \cite{mm1}, \cite{mm2}, about  the characterization of the tangent space in a framework of Riemannian manifold of infinite dimension.\\
The section $3$ deals with the optimality condition of first order for the shape optimization and the computation of the covariant derivative. This latter  plays a key role in our final result. We shall give a direct way to compute it  which appears as a simplified expression.

In  section $4,$ we shall recall some technical but classical  computations of shape second derivative and established a result (stated as a proposition) giving the expression of the quadratic form associated to the quadrature surface problem.

The section $5$ which contains our main contributions, is devoted to the positiveness of the shape hessian in a Riemannian point of view  of infinite dimension. And, we shall propose simple control which allows to get key  information on the optimal shape domains when  these latters are strict local mminimum or critical point of the shape functional considered. 

\section{Characterization of tangent space at a point of $B_{e}$} 
The aim is to analyze the  correlation of the  Riemannian geometry on infinite dimensional maniolds $B_{e}$ with shape optimization.\\
The authors would like to stress on the fact that, what follow has been already done in pioneering works, see \cite{mm1},  \cite{mm2},  \cite{mm3}. We only reproduce some fundamental steps related to our work.\\

 Let   $\Omega$ be a simply connected and compact subset of $\mathbb{R}^{2}$ with $\Omega\neq\emptyset$ and $\mathcal{C}^{\infty}$ boundary $\partial\Omega$. As always in shape optimization, the boundary of the shape is all that metters. Thus we can identify the set of all shapes with the set of all those boundaries.\\
 
  Let $Emb(\mathbb{S}^{1},\mathbb{R}^{2})$ be the set of all  smooth embeddings on $\mathbb{S}^{1}$ in the plan $\mathbb{R}^{2}$, its elements are the injective mappings $c:\mathbb{S}^{1}\longrightarrow\mathbb{R}^{2}$. Let us $Diff(\mathbb{S}^{1})$ be the set of all $\mathcal{C}^{\infty}$ diffeomorphism on $\mathbb{S}^{1}$ wich opere diferentiably on $Emb(\mathbb{S}^{1},\mathbb{R}^{2}).$ Let us consider $B_{e}$ as the quotient of $Emb(\mathbb{S}^{1},\mathbb{R}^{2})$ under the action of $Diff(\mathbb{S}^{1})$ on $Emb(\mathbb{S}^{1},\mathbb{R}^{2})$. In terme whole we have 
$$
B_{e}(\mathbb{S}^{1},\mathbb{R}^{2}):=\{ \ [c]\ /\  c\in Emb \}\ 
\mbox{where} \  [c]:=\{c^{\prime}\in Emb\  /\ c^{\prime}\sim c \}$$
To characterize the tangent space at $B_{e}$ we start with the characterization of the tangent space at $Emb$ denoted $T_{c}Emb$ and the tangent space at the orbit of $c$ by $Diff(\mathbb{S}^{1})$ at $c$ denoted $T_{c}(Diff(\mathbb{S}^{1}).c)$. Thus the tangent space to $B_{e}$ is then identified with an additional to $T_{c}(Diff(\mathbb{S}^{1}).c)$ in $T_{c}Emb$. 
\begin{proposition}
Let $c\in Emb$, then the tangent space at $c$ to $Emb$ is given by: $T_{c}Emb=\mathcal{C}^{\infty}(\mathbb{S}^{1},\mathbb{R}^{2}).$
\end{proposition}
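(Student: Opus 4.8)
The plan is to recognise $Emb(\mathbb{S}^{1},\mathbb{R}^{2})$ as an \emph{open} subset of the Fr\'echet space $\mathcal{C}^{\infty}(\mathbb{S}^{1},\mathbb{R}^{2})$, equipped with the topology of uniform convergence of all derivatives, and then to use the elementary fact that an open subset $U$ of a locally convex topological vector space $V$ is a smooth manifold whose tangent space at every point is canonically $V$ itself. Granting that openness, the identification is immediate in both directions: for $c\in Emb$ and any $h\in\mathcal{C}^{\infty}(\mathbb{S}^{1},\mathbb{R}^{2})$, the affine path $t\mapsto c+th$ stays in $Emb$ for $|t|$ small (openness) and has velocity $h$ at $t=0$, so every smooth map $\mathbb{S}^{1}\to\mathbb{R}^{2}$ is realised as a tangent vector at $c$; conversely, the velocity at $0$ of any smooth path through $c$ in $Emb$ is by definition an element of $\mathcal{C}^{\infty}(\mathbb{S}^{1},\mathbb{R}^{2})$. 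Hence $T_{c}Emb=\mathcal{C}^{\infty}(\mathbb{S}^{1},\mathbb{R}^{2})$, as claimed. This is the argument used in \cite{mm1}, \cite{mm2}, and I would follow it.

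It remains to prove the openness of $Emb$, which I would split according to the two conditions defining an embedding of the compact manifold $\mathbb{S}^{1}$: being an immersion and being injective. For the immersion part: if $c$ is an immersion, then $\theta\mapsto|c'(\theta)|$ is continuous on the compact $\mathbb{S}^{1}$, hence bounded below by some $\delta>0$, and any $c_{1}$ with $\sup_{\theta}|c_{1}'(\theta)-c'(\theta)|<\delta$ is again an immersion; this condition is controlled by a single Fr\'echet seminorm, so the set of immersions is open. For injectivity, which is the delicate step, I would argue by contradiction: suppose $c_{n}\to c$ in $\mathcal{C}^{\infty}$ with $c_{n}(\theta_{n})=c_{n}(\eta_{n})$ for some $\theta_{n}\neq\eta_{n}$; by compactness of $\mathbb{S}^{1}\times\mathbb{S}^{1}$ pass to a subsequence with $\theta_{n}\to\theta_{\ast}$, $\eta_{n}\to\eta_{\ast}$, so $c(\theta_{\ast})=c(\eta_{\ast})$, and injectivity of $c$ forces $\theta_{\ast}=\eta_{\ast}$; then near the diagonal a tubular-neighbourhood (or mean value) estimate using the uniform immersion lower bound $\delta$ shows that $c_{n}$, being $\mathcal{C}^{1}$-close to $c$, is injective on a fixed neighbourhood of $\theta_{\ast}$ for $n$ large, contradicting $c_{n}(\theta_{n})=c_{n}(\eta_{n})$. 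Combining the two, $Emb$ is open in $\mathcal{C}^{\infty}(\mathbb{S}^{1},\mathbb{R}^{2})$.

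The main obstacle is precisely this ``injectivity is an open condition'' step; the immersion part and the tangent-space identification are routine. The only other point requiring care is that the words ``smooth manifold'' and ``tangent space'' are to be read in the sense appropriate to Fr\'echet (or convenient) manifolds as in \cite{mm1}, \cite{mm2}; with that convention fixed, the proof above is the standard one and nothing beyond the openness of $Emb$ intervenes.
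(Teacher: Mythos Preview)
Your proposal is correct and follows the same underlying approach as the paper: both identify $T_{c}Emb$ via the affine path $t\mapsto c+th$ and use that this path stays in $Emb$ for small $t$. The paper's proof simply asserts that $c(t,\theta)=c(\theta)+th(\theta)$ is an ``embedding path'' without justification, whereas you supply the missing argument by proving explicitly that $Emb$ is open in $\mathcal{C}^{\infty}(\mathbb{S}^{1},\mathbb{R}^{2})$; so your write-up is strictly more complete than the paper's, but not a different route.
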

\begin{proof}
Let $h\in T_{c}Emb$, then $h$ is obtained by looking at a path of embeddings which passes through $c$. Let $c:I\times \mathbb{S}^{1}\longrightarrow\mathbb{R}^{2}$ be an embedding path such that $c(t,\theta)=c(\theta)+th(\theta)$ where $h:\mathbb{S}^{1}\longrightarrow\mathbb{R}^{2}$ we have : $\frac{d}{dt}_{|t=0}c(t,\theta)=h(\theta)$. Since $c(t,\theta)$ is an embedding patht then $c(t,\theta)$ is an immersion thus  $$T_{c}Emb=Im(T_{0}c(t,\theta))=\mathcal{C}^{\infty}(\mathbb{S}^{1},\mathbb{R}^{2}).$$ 
\end{proof}
\begin{proposition}\label{proposition-2}
The tangent space to the orbit of $c$ by $Diff(\mathbb{S}^{1})$ is the subspace of  $T_{c}Emb$ formed by vectors $m(\theta)$ of type $c_{\theta}(\theta)=c^{'}(\theta)$ times a function.
\end{proposition}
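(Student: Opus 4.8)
The plan is to describe the orbit explicitly and differentiate along curves through $c$. The orbit is $\mathrm{Diff}(\mathbb{S}^{1}).c=\{\,c\circ\varphi:\varphi\in\mathrm{Diff}(\mathbb{S}^{1})\,\}$, and the base point $c$ corresponds to $\varphi=\mathrm{id}_{\mathbb{S}^{1}}$. First I would take an arbitrary smooth path $t\mapsto\varphi_{t}$ in $\mathrm{Diff}(\mathbb{S}^{1})$ with $\varphi_{0}=\mathrm{id}_{\mathbb{S}^{1}}$ and set $v(\theta):=\frac{d}{dt}\big|_{t=0}\varphi_{t}(\theta)$; here $v$ ranges over $T_{\mathrm{id}}\mathrm{Diff}(\mathbb{S}^{1})$, which is the space of smooth vector fields on $\mathbb{S}^{1}$, identified with $\mathcal{C}^{\infty}(\mathbb{S}^{1},\mathbb{R})$ through the coordinate $\theta$. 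Differentiating $t\mapsto c\circ\varphi_{t}$ at $t=0$ by the chain rule gives
$$
\frac{d}{dt}\Big|_{t=0}c\big(\varphi_{t}(\theta)\big)=c_{\theta}(\theta)\,v(\theta),
$$
so every tangent vector to the orbit at $c$ is of the announced form: $c_{\theta}(\theta)=c'(\theta)$ multiplied by a scalar function on $\mathbb{S}^{1}$.

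For the reverse inclusion I would check that every scalar function is attained. Given $a\in\mathcal{C}^{\infty}(\mathbb{S}^{1},\mathbb{R})$, consider $\varphi_{t}(\theta):=\theta+t\,a(\theta)$, whose derivative $1+t\,a'(\theta)$ is strictly positive for $|t|$ small by compactness of $\mathbb{S}^{1}$; hence $\varphi_{t}$ is an orientation-preserving element of $\mathrm{Diff}(\mathbb{S}^{1})$ for small $t$, with $\varphi_{0}=\mathrm{id}_{\mathbb{S}^{1}}$ and $\frac{d}{dt}\big|_{t=0}\varphi_{t}=a$. Then $t\mapsto c\circ\varphi_{t}$ is a curve in the orbit through $c$ whose velocity at $t=0$ is exactly $a(\theta)\,c_{\theta}(\theta)$ by the computation above. (Equivalently one could take $\varphi_{t}$ to be the flow of the vector field $a\,\partial_{\theta}$, which avoids restricting $t$.) Combining the two inclusions yields
$$
T_{c}\big(\mathrm{Diff}(\mathbb{S}^{1}).c\big)=\{\,a\,c_{\theta}:a\in\mathcal{C}^{\infty}(\mathbb{S}^{1},\mathbb{R})\,\},
$$
and since $c$ is an embedding, $c_{\theta}(\theta)\neq 0$ for every $\theta$, so this subspace of $T_{c}\mathrm{Emb}$ is isomorphic to $\mathcal{C}^{\infty}(\mathbb{S}^{1},\mathbb{R})$; together with Proposition 2.2 this is what will later split $T_{c}\mathrm{Emb}$ into the tangent direction along the orbit and a normal complement.

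The only genuinely delicate point is the identification $T_{\mathrm{id}}\mathrm{Diff}(\mathbb{S}^{1})\cong\mathcal{C}^{\infty}(\mathbb{S}^{1},\mathbb{R})$ together with the claim that the initial velocity of a path of diffeomorphisms may be prescribed arbitrarily; this is exactly where one must ensure the path $\varphi_{t}$ stays inside $\mathrm{Diff}(\mathbb{S}^{1})$ for $t$ near $0$, which is secured by the positivity of $1+t\,a'(\theta)$ (or by the flow-of-a-vector-field argument). Everything else reduces to a single application of the chain rule, and the argument follows the pattern already used in \cite{mm1}, \cite{mm2}, \cite{mm3}.
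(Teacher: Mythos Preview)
Your proof is correct and follows essentially the same approach as the paper: both differentiate a path $t\mapsto c\circ\varphi_t$ of reparametrizations at $t=0$ via the chain rule to obtain $c_\theta(\theta)\cdot\frac{d}{dt}\big|_{t=0}\varphi_t(\theta)$. Your version is in fact more complete, since you explicitly supply the reverse inclusion (that every $a\in\mathcal{C}^\infty(\mathbb{S}^1,\mathbb{R})$ arises as an initial velocity of some path of diffeomorphisms), which the paper leaves implicit.
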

\begin{proof}
We have $Diff(\mathbb{S}^{1}).c \subset Emb$ because these are all the bijective reparametrizations of the same curve $c(\theta)$ therefore $T_{c}(Diff(\mathbb{S}^{1}).c) \subset T_{c}Emb$. Let $m\in T_{c}(Diff(\mathbb{S}^{1}).c)$ then $m$ is obtained by looking at a family of parametrizations $c(t,\theta):=c(\phi(t,\theta))$ of the curve $c(\theta)$ where $$\phi(t,.):\mathbb{S}^{1}\longrightarrow \mathbb{S}^{1}$$ is a diffeomorphism of \  $\mathbb{S}^{1}$ where $s\in \mathbb{S}^{1}$ and $t$ is the parameter of the variation of the reparametrization $\phi(t,s)$ of \  $\mathbb{S}^{1}$. We have  $\frac{d}{dt}_{|t=0}c(t,\theta)=c^{\prime}(\theta)\frac{d}{dt}_{|t=0}\phi(0,\theta)$ since $c(t,\theta)$ is a parametrization of the curve $c(\theta)$ so it an immersion then we have $$T_{c}(Diff(\mathbb{S}^{1}).c)=Im(T_{0}c(t,\theta))=c^{\prime}(\theta)\frac{d}{dt}_{|t=0}\phi(0,\theta).$$
\end{proof}
\begin{remark}
The choice of the supplementary must be coherent with the action  of $Diff(\mathbb{S}^{1})$ i.e we choose a supplementary at  $T_{c}(Diff(\mathbb{S}^{1}).c)$ in $T_{c}Emb$ invariably by the action of $Diff(\mathbb{S}^{1})$. For that it suffices to define a metric on $Emb$ for which $Diff(\mathbb{S}^{1})$ acts isometrically and to define the supplementary of $T_{c}(Diff(\mathbb{S}^{1}).c)$ as its orthogonal for this metric.
\end{remark}
\begin{definition}
Let us $G^{0}$ be an invariant metric by the action of $Diff(\mathbb{S}^{1})$ on the manifold $Emb(\mathbb{S}^{1},\mathbb{R}^2)$, defined by the application:
$$
\begin{array}{ccccl}
G^{0} & : & T_{c}Emb\times T_{c}Emb & \to & \mathbb{R} \\
 & & (h,m) & \mapsto & \displaystyle \int_{\mathbb{S}^{1}}\big<h(\theta),m(\theta)\big>|c^{\prime}(\theta)|d\theta \\
\end{array}
$$
where $\big<h(\theta),m(\theta)\big>$ is the scalare product of $h(\theta)$ and $m(\theta)$ in $\mathbb{R}^{2}$. 
\end{definition}
 \begin{proposition}\label{proposition-4}
 Let $c\in B_{e}$ then $T_{c}B_{e}$ is colinear with the vector fields following the unit normal outside the form $\Omega$. In other words $$T_{c}B_{e}\simeq\{ h\ |\  h=\alpha \vec{\nu},\alpha\in\mathcal{C}^{\infty}(\mathbb{S}^{1},\mathbb{R}) \}.$$
 \end{proposition}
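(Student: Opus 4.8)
The plan is to exhibit $T_{c}B_{e}$ as the orthogonal complement, with respect to the metric $G^{0}$, of the tangent space to the $Diff(\mathbb{S}^{1})$-orbit inside $T_{c}Emb$, and then to identify that complement explicitly. By Proposition \ref{proposition-2}, a vector $m \in T_{c}(Diff(\mathbb{S}^{1}).c)$ has the form $m(\theta) = \beta(\theta)\, c'(\theta)$ for some $\beta \in \mathcal{C}^{\infty}(\mathbb{S}^{1},\mathbb{R})$, i.e.\ it is everywhere tangent to the curve. Since $\mathbb{R}^{2}$ splits pointwise along $c$ into the line spanned by the unit tangent $c'(\theta)/|c'(\theta)|$ and the line spanned by the unit normal $\vec{\nu}(\theta)$, any $h \in T_{c}Emb = \mathcal{C}^{\infty}(\mathbb{S}^{1},\mathbb{R}^{2})$ decomposes uniquely as $h(\theta) = a(\theta)\,c'(\theta) + \alpha(\theta)\,\vec{\nu}(\theta)$ with $a,\alpha \in \mathcal{C}^{\infty}(\mathbb{S}^{1},\mathbb{R})$ (smoothness of $a,\alpha$ follows because $c$ is a smooth embedding, so $|c'|$ never vanishes and $\vec\nu$ is smooth).

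Next I would compute the $G^{0}$-orthogonality condition. For $h$ as above and $m = \beta c'$ in the orbit tangent space,
$$
G^{0}(h,m) = \int_{\mathbb{S}^{1}} \big\langle a(\theta)c'(\theta) + \alpha(\theta)\vec\nu(\theta),\, \beta(\theta)c'(\theta)\big\rangle\, |c'(\theta)|\, d\theta = \int_{\mathbb{S}^{1}} a(\theta)\beta(\theta)\,|c'(\theta)|^{3}\, d\theta,
$$
using $\langle c',\vec\nu\rangle = 0$ and $\langle c',c'\rangle = |c'|^{2}$. This vanishes for every $\beta \in \mathcal{C}^{\infty}(\mathbb{S}^{1},\mathbb{R})$ if and only if the continuous function $a(\theta)|c'(\theta)|^{3}$ is identically zero, hence (since $|c'|>0$) if and only if $a \equiv 0$. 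Therefore the $G^{0}$-orthogonal complement of $T_{c}(Diff(\mathbb{S}^{1}).c)$ in $T_{c}Emb$ consists exactly of the fields $h = \alpha \vec\nu$ with $\alpha \in \mathcal{C}^{\infty}(\mathbb{S}^{1},\mathbb{R})$. By the Remark preceding the Definition, this orthogonal complement is the invariant choice of supplementary and is identified with $T_{c}B_{e}$, giving the claimed isomorphism $T_{c}B_{e}\simeq\{h \mid h = \alpha\vec\nu,\ \alpha\in\mathcal{C}^{\infty}(\mathbb{S}^{1},\mathbb{R})\}$.

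The only genuinely delicate point is the passage from the orthogonal complement at the level of $Emb$ to a statement about the quotient $B_{e}$: one must check that the splitting $T_{c}Emb = T_{c}(Diff(\mathbb{S}^{1}).c) \oplus (T_{c}(Diff(\mathbb{S}^{1}).c))^{\perp_{G^0}}$ is preserved under the $Diff(\mathbb{S}^{1})$-action — this is precisely where the invariance of $G^{0}$ is used — so that the complement descends to a well-defined subspace of $T_{[c]}B_{e}$ independent of the representative $c$. I expect this to be the main obstacle to a fully rigorous argument; the infinite-dimensional quotient-manifold structure and the fact that $B_e$ is an honest manifold away from curves with extra symmetry are taken from \cite{mm1}, \cite{mm2}, and I would invoke those references rather than reprove them. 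The pointwise linear algebra and the orthogonality computation above are routine once that framework is in place.
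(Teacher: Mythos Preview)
Your proposal is correct and follows essentially the same approach as the paper: identify $T_{c}B_{e}$ with the $G^{0}$-orthogonal complement of $T_{c}(Diff(\mathbb{S}^{1}).c)$ in $T_{c}Emb$, use Proposition~\ref{proposition-2} to see that orbit tangent vectors are multiples of $c'$, and conclude that the complement consists of the normal fields $\alpha\vec\nu$. The paper's proof is terser --- it jumps directly from ``orthogonal to all $m=\beta c'$'' to ``$h$ is pointwise perpendicular to $c'$'' without writing out the integral --- whereas you make the computation of $G^{0}(h,m)$ explicit and add a careful remark on why the splitting descends to the quotient; these additions are sound and only sharpen the same argument.
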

\begin{proof} 
From the results shown above  the orthogonal of $T_{c}(Diff(\mathbb{S}^{1}).c)$ in $T_{c}Emb$ is the set of $h(\theta)$ in $T_{c}Emb$ which are orthogonal for the metric $G^{0}$ to all $m(\theta)=\frac{d}{dt}_{|t=0}\phi(0,\theta)c^{\prime}(\theta)$ this means that $h(\theta)$ must be perpendicular to $c^{\prime}(\theta)$ so $h(\theta)=\alpha(\theta)\vec\nu(\theta)$ where $\alpha(\theta)\in\mathcal{C}^{\infty}(\mathbb{S}^{1},\mathbb{R}).$ Therefore we have 
 $$
 T_{c}B_{e}\simeq\{ h | h=\alpha \vec{\nu},\ \alpha\in\mathcal{C}^{\infty}(\mathbb{S}^{1},\mathbb{R}) \}
 $$
where $\vec{\nu}$ is the unit normal outside the form $\Omega$ defined at the boundary by $\partial\Omega=c$ such that $\vec{\nu}(\theta)\perp c^{\prime}(\theta)$ for all $\theta\in S^{1}$ and $c^{\prime}$ defined the circumferential derivative.
\end{proof}
Now let us consider the following therminology:
$$ds=|c_{\theta}|d\theta\qquad\mbox{arc length}. $$
\begin{definition}
A Sobolev-type metric on the manifold $B_{e}(\mathbb{S}^{1},\mathbb{R}^{2})$ is map:
$$
\begin{array}{ccccl}
G^{A} & : & T_{c}B_{e}\times T_{c}B_{e} & \to & \mathbb{R} \\
 & & (h,m) & \mapsto & \displaystyle \int_{\mathbb{S}^{1}}(1+AK_{c}^{2}(\theta))\big<h(\theta),m(\theta)\big>|c^{\prime}(\theta)|d\theta \\
\end{array}
$$
where $K_{c}$ is the sectional curvature of $c$ and $A$ a positive real. 
\end{definition}
\begin{remark}
\begin{enumerate}
\item By setting $h=\alpha\vec{\nu}$, $m=\beta\vec{\nu}$ and by parametrizing $c(s)$ by arc length we have :
$$
G^{A}(h,m)=\int_{\partial\Omega}(1+AK_{c}^{2}(\theta))\alpha\beta ds.
$$
\item If $A> 0$ $G^{A}$ is a Riemannian metric.
\end{enumerate}
\end{remark}
\section{Optimality condition of first order and covariant derivative}
The shape optimization problem we have, consists in finding the solution of the following optimization problem:
$$\min_{\Omega} J(\Omega)$$ $J(\Omega)=-\frac{1}{2}\displaystyle \int_{\Omega}|\nabla u_{\Omega}|^{2}dx+\frac{k^{2}}{2}|\Omega|$ is a  shape functional. We  seek the shape derivative associated with the functional $J(\Omega)$ following the direction of the vector field $V:\mathbb{R}^{2}\to\mathbb{R}^{2},$  $\mathcal{C}^{\infty}$ class :
$$
dJ(\Omega)[V]=\displaystyle \int_{\partial\Omega}\left(k^{2}-\left(\frac{\partial u_{\Omega}}{\partial\vec\nu}\right)^{2}\right)\big<V,\vec\nu\big>d\sigma.
$$
If $V_{|\partial\Omega}=\alpha\vec{\nu}$ we can still write :
\begin{eqnarray}\label{GRD1}
dJ(\Omega)[V]=\int_{\partial\Omega}\left(k^{2}-\left(\frac{\partial u_{\Omega}}{\partial\vec\nu}\right)^{2}\right)\alpha d\sigma.
\end{eqnarray}
It should be noted that there is a link between the shape derivative of $J$ and the gradient in Riemannian structures see \cite{Schu} and \cite{W}. To illustrate our claiming, let us consider  the Sobolev metric $G^A$ to ease the understanding of the computations. But the  authors think that it is quite possible  to generalize this  study in higher dimension than two and even with other metrics.\\
Our purpose is to  calculate the gradient of $J: B_{e}\to\mathbb{R}$ then we have :
\begin{equation}\label{GRD2}
dJ(\Omega)[V]=G^{A}(grad J(\Omega), V)
\end{equation}
if $V_{|\partial\Omega}=h$ we have
\begin{eqnarray}
dJ_{c}(h)&=&G^{A}(grad J(\Omega), h)\nonumber\\
dJ_{c}(h)&=&\int_{\partial\Omega}\left(1+AK^{2}_{c}\right)gradJ\alpha. \nonumber
\end{eqnarray}
But from (\ref{GRD2}), $$dJ_{c}(h)=\int_{\partial\Omega}\left(k^{2}-\left(\frac{\partial u_{\Omega}}{\partial\vec\nu}\right)^{2}\right)\alpha d\sigma$$ and thus 
$$
\int_{\partial\Omega}\left(k^{2}-\left(\frac{\partial u_{\Omega}}{\partial\vec\nu}\right)^{2}\right)\alpha d\sigma=\int_{\partial\Omega}\left(1+AK^{2}_{c}\right)gradJ\alpha d\sigma
$$
so that $$gradJ=\frac{1}{1+AK^{2}_{c}}\left(k^{2}-\left(\frac{\partial u_{\Omega}}{\partial\vec\nu}\right)^{2}\right).$$
The next step is  to compute the explicit form of the covariant derivative $\nabla_{h}m\in T_{c}B_{e}$ with $h, m\in T_{c}B_{e}.$\\
 The following result has been already established in a pioneering work, see \cite{Schu}. We only bring another way in the proof and additional details in the computations of the covariant derivative.
 In the last part of the paper where we think it contains our main contributions, the covariant derivative plays a key role  in the study of the positiveness of the quadratic form. We shall come back to this fact.
\begin{theorem}
Let $\Omega\subset\mathbb{R}^{2}$ at least of class $\mathcal C ^2,$ $V, W\in\mathcal{C}^{\infty}(\mathbb{R}^{2},\mathbb{R}^{2})$ vector fields which are orthogonal to the boundaries i.e  $$V_{|\partial\Omega}=\alpha\vec{\nu}$$ with $\alpha:=\big<V_{|\partial\Omega},\vec{\nu}\big>$ and $$W_{|\partial\Omega}=\beta\vec{\nu}$$ with $\beta:=\big<W_{|\partial\Omega},\vec{\nu}\big>$ such that $V_{|\partial\Omega}=h:=\alpha\vec{\nu}$,\ $W_{|\partial\Omega}=m=:\beta\vec{\nu}$ belongs to the tangent space of $B_{e}$. Then the covariant derivative associate with the Riemannian metric $G^{A}$ can be expressed as follows :
\begin{eqnarray}
\nabla_V{W}:&=&\nabla_{h}{m}=\frac{\partial\beta}{\partial\vec{\nu}}\alpha+\left(\frac{3AK_{c}^{3}+K_{c}}{1+AK_{c}^{2}}\right)\alpha\beta\nonumber\\
 &=&\big<D_{V}W,\vec{\nu}\big>+\left(\frac{3AK_{c}^{3}+K_{c}}{1+AK_{c}^{2}}\right)\big<V,\vec{\nu}\big>\big<W,\vec{\nu}\big>\nonumber.
\end{eqnarray}
where $D_{V}W$ is the directional derivative of the vector field $W$ in the direction $V$.
\end{theorem}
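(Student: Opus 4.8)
The plan is to determine the Levi-Civita connection of the weak Riemannian metric $G^{A}$ from its two characterising properties --- torsion-freeness and compatibility with $G^{A}$ --- i.e. from the Koszul formula. By Proposition~\ref{proposition-4}, a tangent vector of $B_{e}$ at $c$ is completely encoded by its normal component, a scalar function on $\partial\Omega$, so it suffices to exhibit a scalar $\nabla_{h}m$ such that
$$2\,G^{A}(\nabla_{h}m,\ell)=D_{h}\!\left(G^{A}(m,\ell)\right)+D_{m}\!\left(G^{A}(h,\ell)\right)-D_{\ell}\!\left(G^{A}(h,m)\right)+G^{A}([h,m],\ell)-G^{A}([h,\ell],m)-G^{A}([m,\ell],h)$$
for every test field $\ell=\gamma\vec{\nu}\in T_{c}B_{e}$, where $D_{h}$ denotes the derivative at $c$, in the direction $h=\alpha\vec{\nu}$, of a scalar functional on $B_{e}$. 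Since $G^{A}$ is the curvature-weighted $L^{2}$ pairing $G^{A}(h,m)=\int_{\partial\Omega}(1+AK_{c}^{2})\alpha\beta\,ds$, each term on the right is an explicit boundary integral in $\alpha,\beta,\gamma$ and their derivatives; once these are assembled and the common factor $1+AK_{c}^{2}$ is cancelled, the pointwise formula for $\nabla_{h}m$ follows.

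The required inputs are the classical first-variation identities for a normal perturbation $c\mapsto c+t\,\alpha\vec{\nu}$ of a plane curve: the arclength element varies by $\delta_{h}(ds)=K_{c}\alpha\,ds$ (with $K_{c}$ the curvature, positive for convex $\Omega$ with exterior normal $\vec{\nu}$), the unit normal varies tangentially, $\delta_{h}\vec{\nu}=-(\partial_{s}\alpha)\,T$ with $T$ the unit tangent, and the curvature varies through the Jacobi operator, $\delta_{h}K_{c}=-(\partial_{s}^{2}\alpha+K_{c}^{2}\alpha)$, hence $\delta_{h}(1+AK_{c}^{2})=2AK_{c}\,\delta_{h}K_{c}$. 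Two further observations make the remaining pieces explicit. First, because $V_{|\partial\Omega}=\alpha\vec{\nu}$ and $W_{|\partial\Omega}=\beta\vec{\nu}$ are purely normal along $\partial\Omega$, the scalar representative of $m$ varies by $\delta_{h}\beta=\langle D_{V}W,\vec{\nu}\rangle=\alpha\,\partial\beta/\partial\vec{\nu}$ --- the tangential part of $\delta_{h}\vec{\nu}$ being annihilated by $W$, which has no tangential component there. Second, the Lie bracket on $B_{e}$ of two vector fields induced from ambient vector fields on $\mathbb{R}^{2}$ is the normal component of the ambient bracket, so $[h,m]=\langle D_{V}W-D_{W}V,\vec{\nu}\rangle$. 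I would substitute all of this into the Koszul identity, use the periodicity on $\mathbb{S}^{1}$ to integrate by parts so as to transfer every derivative onto $\gamma$, and then collect.

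The hard part is the bookkeeping around the curvature weight. Through $\delta_{h}(1+AK_{c}^{2})$ the three derivative terms of Koszul's formula acquire second-order contributions in $\partial_{s}^{2}\alpha$, $\partial_{s}^{2}\beta$ and $\partial_{s}^{2}\gamma$; one has to check that, after integration by parts, the $\partial_{s}^{2}\alpha$ and $\partial_{s}^{2}\beta$ terms cancel --- so that $\nabla_{h}m$ reduces, apart from the single transverse derivative $\alpha\,\partial\beta/\partial\vec{\nu}$, to an expression of order zero in $\alpha$ and $\beta$ --- while the various curvature terms (coming from $\delta K_{c}$, from $\delta(ds)$, and from the $AK_{c}^{2}$ already present in the metric) recombine into precisely the coefficient $(3AK_{c}^{3}+K_{c})/(1+AK_{c}^{2})$; keeping the orientation conventions for $\vec{\nu}$, the sign of $K_{c}$, and the sign of $\delta(ds)$ coherent throughout is what makes this delicate. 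As an independent check, and as an alternative to handling the brackets, I would verify \emph{a posteriori} that the candidate $\Gamma(h,m):=\langle D_{V}W,\vec{\nu}\rangle+\tfrac{3AK_{c}^{3}+K_{c}}{1+AK_{c}^{2}}\,\alpha\beta$ is torsion-free --- the curvature term being symmetric in $\alpha,\beta$, one gets $\Gamma(h,m)-\Gamma(m,h)=\langle D_{V}W-D_{W}V,\vec{\nu}\rangle=[h,m]$ --- and $G^{A}$-compatible, i.e. $D_{h}\!\left(G^{A}(m,\ell)\right)=G^{A}(\Gamma(h,m),\ell)+G^{A}(m,\Gamma(h,\ell))$ for all $\ell$; by uniqueness of the Levi-Civita connection this then forces $\nabla_{h}m=\Gamma(h,m)$, which is exactly the asserted formula.
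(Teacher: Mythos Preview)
Your approach via the full Koszul formula is sound and is the systematic way to pin down the Levi--Civita connection. The paper, however, takes a shorter and genuinely different route: it invokes only the single metric-compatibility identity $h\bigl(G^{A}(m,\ell)\bigr)=G^{A}(\nabla_{h}m,\ell)+G^{A}(m,\nabla_{h}\ell)$, computes the left side by shape-differentiating the boundary integral, and then reads off $\nabla_{h}m$ by matching terms. In doing so the paper treats the curvature as though it were an ambient scalar with a bona fide normal derivative, obtaining simply $\partial K_{c}/\partial\vec{\nu}=K_{c}^{2}$ --- thereby sidestepping altogether the $\partial_{s}^{2}\alpha$ contribution to $\delta_{h}K_{c}$ that you (correctly) flag as the delicate bookkeeping step. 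Because compatibility alone does not determine a connection, the paper closes the identification with an ad~hoc substitution for $\nabla_{V}Z$ (its flat part $\alpha\,\partial\gamma/\partial\vec{\nu}$) and then checks torsion-freeness separately in the remark that follows. Your proposed \emph{a posteriori} verification --- show the candidate is torsion-free and $G^{A}$-compatible, invoke uniqueness --- is exactly the logic the paper ultimately rests on; the difference is that you reach the candidate through the six-term Koszul identity with the full curvature variation and an integration-by-parts cancellation, while the paper reaches it through one compatibility equation and the extended-curvature shortcut. Your route costs more algebra but makes explicit where the second-order tangential terms go; the paper's is quicker but leaves that cancellation implicit in the ``normal derivative'' interpretation of $K_{c}$.
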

\begin{proof}
To calculate $\nabla_{V}W$ we use the compatibility of the  metric with the connection $\nabla$.
We have $hG^{A}(m,l)=G^{A}(\nabla_{h}m,l)+G^{A}(m,\nabla_{h}l)$ for all $l\in T_{C}B_{e}$.
Let $Z$ be a vector fields such that $l:=Z_{|\partial\Omega}=\gamma\vec{\nu}$,
we set $$F(c_{t}(\theta))=(1+AK_{c}^{2}(\theta))\big<m(\theta),l(\theta)\big>$$ thus $G^{A}(m,l)=\displaystyle \int_{\mathbb{S}^{1}}F(c_{t}(\theta))|c_{t}^{\prime}(\theta)|d\theta$ then we calculate the following expression 
$$
h(G^{A}(m,l))=\frac{d}{dt}_{|t=0}
\begin{pmatrix}
\displaystyle \int_{\mathbb{S}^{1}}F(c_{t}(\theta))|c_{t}^{\prime}(\theta)|d\theta\end{pmatrix}[V]
$$
where $c_{t}(\theta)$ denotes a family of (parameterized) curves with $c_{0}(\theta)=c(\theta)$ and $c_{t}^{\prime}(\theta)$  denotes the derivative with respect to $\theta$ of the curve $c_{t}:\theta\longrightarrow c_{t}(\theta)$. We have
\begin{eqnarray}
h(G^{A}(m,l))&=&\int_{\mathbb{S}^{1}}\begin{pmatrix}\frac{\partial[(1+AK_{c}^{2})\beta\gamma]}{\partial\vec{\nu}}\alpha|c_{t}^{\prime}(\theta)|+\frac{\partial(|c_{t}^{\prime}(\theta)|)}{\partial\vec{\nu}}(1+AK_{c}^{2})\beta\gamma\alpha)\end{pmatrix}d\theta\nonumber\\
&=&\int_{\mathbb{S}^{1}}\begin{pmatrix}
 2AK_{c}(\frac{\partial K_{c}}{\partial\vec{\nu}})\alpha\beta\gamma+(1+AK_{c}^{2})\frac{\partial\beta}{\partial\vec{\nu}}\gamma\alpha+(1+AK_{c}^{2})\frac{\partial\gamma}{\partial\vec{\nu}}\beta\alpha\end{pmatrix}d\theta \nonumber\\
& &+\int_{\mathbb{S}^{1}}\frac{\partial|c_{t}^{\prime}(\theta)|}{\partial\vec{\nu}}(1+AK_{c}^{2})\beta\gamma\alpha d\theta\nonumber
\end{eqnarray}
Now let  us calculate $\frac{\partial K_{c}}{\partial\vec{\nu}}.$ We have :
\begin{eqnarray*} \frac{\partial K_{c}}{\partial\vec{\nu}}=\frac{\big<\vec\nu,c_{\theta}\big>}{|c_{\theta}|^{2}}K_{\theta}+\frac{\big<\vec\nu,ic_{\theta}\big>}{|c_{\theta}|}K^{2}+\frac{1}{|c_{\theta}|}\left(\frac{1}{|c_{\theta}|}\left(\frac{\big<\vec\nu,ic_{\theta}\big>}{|c_{\theta}|}\right)_{\theta}\right)_{\theta}.
\end{eqnarray*}
Then we have $\big<\vec\nu, c_{\theta}\big>=0$ because $\vec\nu\perp c_{\theta}$  and  moreover,
\begin{eqnarray}
\frac{\big<\vec\nu,ic_{\theta}\big>}{|c_{\theta}|}&=&\big<\vec\nu, \frac{ic_{\theta}}{|c_{\theta}|}\big>\nonumber\\
\frac{\big<\vec\nu,ic_{\theta}\big>}{|c_{\theta}|}&=&\big<\vec\nu, \vec\nu \big>\nonumber\\
\frac{\big<\vec\nu,ic_{\theta}\big>}{|c_{\theta}|}&=&\|\vec\nu\|^{2}=1\nonumber
\end{eqnarray}
Hence, we obtain that:
\begin{eqnarray*}
 \frac{\partial K_{c}}{\partial\vec{\nu}}=K_{c}^{2}+\frac{1}{|c_{\theta}|}\left(\frac{1}{|c_{\theta}|}\left(\frac{\big<\vec\nu,ic_{\theta}\big>}{|c_{\theta}|}\right)_{\theta}\right)_{\theta}. 
 \end{eqnarray*}
  Let us compute step by step the above last term in the right hand side.\\
 First,   we have 
\begin{eqnarray*}
\left(\frac{\big<\vec\nu,ic_{\theta}\big>}{|c_{\theta}|}\right)_{\theta}&=&\frac{\partial}{\partial\theta}\left(\frac{\big<\vec\nu,ic_{\theta}\big>}{|c_{\theta}|}\right)\nonumber\\
&=&\frac{\frac{\partial}{\partial\theta}\big<\vec\nu,ic_{\theta}\big>|c_{\theta}|-\frac{\partial|c_{\theta}|}{\partial\theta}\big<\vec\nu,ic_{\theta}\big>}{|c_{\theta}|^{2}}\nonumber\\
&=&\frac{\frac{\partial}{\partial\theta}\big<\vec\nu,ic_{\theta}\big>|c_{\theta}|}{|c_{\theta}|^{2}}\nonumber\\
&=&\frac{\frac{\partial}{\partial\theta}\big<\vec\nu,ic_{\theta}\big>}{|c_{\theta}|}
\end{eqnarray*}
\begin{eqnarray*}
\left(\frac{\big<\vec\nu,ic_{\theta}\big>}{|c_{\theta}|}\right)_{\theta}&=&\frac{\big<\frac{\partial\vec\nu}{\partial\theta},ic_{\theta}\big>+\big<\vec\nu,i\frac{\partial c_{\theta}}{\partial \theta}\big>}{|c_{\theta}|}\nonumber\\
&=&\frac{\big<\vec\nu^{\prime}(\theta),ic_{\theta}\big>+\big<\vec\nu(\theta),ic_{\theta\theta}\big>}{|c_{\theta}|}\nonumber\\
&=&\frac{\big<\vec\nu^{\prime}(\theta),ic_{\theta}\big>}{|c_{\theta}|}+\frac{\big<\vec\nu(\theta),ic_{\theta\theta}\big>}{|c_{\theta}|}\nonumber\\
&=&\big<\vec\nu^{\prime}(\theta),\frac{ic_{\theta}}{|c_{\theta}|}\big>+\frac{\big<\vec\nu(\theta),ic_{\theta\theta}\big>}{|c_{\theta}|}\nonumber\\
&=&\big<\vec\nu^{\prime}(\theta),\vec\nu(\theta)\big>+\frac{\big<\vec\nu(\theta),ic_{\theta\theta}\big>}{|c_{\theta}|}\nonumber
\end{eqnarray*}
Note that   $\|\vec\nu\|^{2}=1$ which is never but $\big<\vec\nu,\vec\nu\big>=1.$ Therefore, by differentiation, we have:
\begin{eqnarray}
\big<\vec{\nu}^{\prime}(\theta),\vec\nu(\theta)\big>+\big<\vec\nu(\theta),\vec\nu^{\prime}(\theta)\big>&=&0\nonumber\\
2\big<\vec\nu^{\prime}(\theta),\vec\nu(\theta)\big>&=&0\nonumber\\
\big<\vec\nu^{\prime}(\theta),\vec\nu(\theta)\big>&=&0.\nonumber
\end{eqnarray}
Indeed, proceeding further the computation, we have:
\begin{eqnarray}
\left(\frac{\big<\vec\nu,ic_{\theta}\big>}{|c_{\theta}|}\right)_{\theta}&=&\frac{\big<\vec\nu(\theta),ic_{\theta\theta}\big>}{|c_{\theta}|}\nonumber\\
&=&\big<\vec\nu(\theta),\frac{ic_{\theta\theta}}{|c_{\theta}|}\big>\nonumber\\
&=&\big<\vec\nu(\theta),\frac{-K_{c}|c_{\theta}|c_{\theta}}{|c_{\theta}|}\big>\nonumber\\
&=&\big<\vec\nu(\theta),-K_{c}c_{\theta}\big>\nonumber\\
&=&-K_{c}\big<\vec\nu(\theta),c_{\theta}\big>=0\nonumber
\end{eqnarray}
Finally, from all the above steps, we have:
$ \frac{1}{|c_{\theta}|}\left(\frac{1}{|c_{\theta}|}\left(\frac{\big<\vec\nu,ic_{\theta}\big>}{|c_{\theta}|}\right)_{\theta}\right)_{\theta}=0$  and  we get  $$\frac{\partial K_{c}}{\partial\vec{\nu}}=K^{2}_{c}.$$
Therefore we have
\begin{eqnarray}
h(G^{A}(m,l))&=&\int_{\partial\Omega}\begin{pmatrix}
 2AK_{c}(\frac{\partial K_{c}}{\partial\vec{\nu}})\alpha\beta\gamma+(1+AK_{c}^{2})\frac{\partial\beta}{\partial\vec{\nu}}\gamma\alpha+(1+AK_{c}^{2})\frac{\partial\gamma}{\partial\vec{\nu}}\beta\alpha\end{pmatrix}d\theta \nonumber\\
& &+\int_{\partial\Omega}\frac{\partial|c_{t}^{\prime}(\theta)|}{\partial\vec{\nu}}(1+AK_{c}^{2})\beta\gamma\alpha d\theta\nonumber\\
&=&\int_{\partial\Omega}\begin{pmatrix}
 2AK_{c}\times K^{2}_{c}\alpha\beta\gamma+(1+AK_{c}^{2})\frac{\partial\beta}{\partial\vec{\nu}}\gamma\alpha+(1+AK_{c}^{2})\frac{\partial\gamma}{\partial\vec{\nu}}\beta\alpha\end{pmatrix}d\theta \nonumber\\
& &+\int_{\partial\Omega}\frac{\partial|c_{t}^{\prime}(\theta)|}{\partial\vec{\nu}}(1+AK_{c}^{2})\beta\gamma\alpha d\theta
\end{eqnarray}
Let  us calculate now the following expression 
$$
\frac{\partial(|c_{t}^{\prime}(\theta)|)}{\partial\vec{\nu}}
$$
To do this we parametrized $c(\theta)$ by arc length i.e $|c^{\prime}(\theta)|=1$. Since $$\big<c^{\prime}(\theta), c^{\prime}(\theta)\big>=1$$ and differentiating it, we have: $$\big<c^{\prime\prime}(\theta), c^{\prime}(\theta)\big>=0.$$ Then $c^{\prime\prime}(\theta)=c_{\theta\theta}(\theta)$ is proportional to $\vec\nu(s)$ so $c^{\prime\prime}(\theta)=K_{c}(\theta)\vec\nu(\theta)$ (this is the  definition  of the curvature of the curve $c$).\\
Let us compute now $\frac{d}{dt}(|c_{t}^{\prime}(\theta)|)$ at $t=0$ where 
\begin{eqnarray}
|c_{t}^{\prime}(\theta)|&=&|\frac{d}{d\theta}(c(\theta)+t\vec\nu(\theta))|\nonumber\\
              &=&|c^{\prime}(\theta)+t\vec\nu^{\prime}(\theta)|\nonumber\\
              &=&(|c^{\prime}(\theta)|^{2}+t^{2}|\vec\nu^{\prime}(\theta)|^{2}+2t\big<c^{\prime}(\theta),\vec\nu^{\prime}(\theta)\big>)^{\frac{1}{2}}
\end{eqnarray}
From a Taylor's expansion of the previous expression  in  $t$, we see that :
$$
\frac{d}{dt}_{|t=0}|c_{t}^{\prime}(\theta)|=\big<c^{\prime}(\theta),\vec\nu^{\prime}(\theta)\big>
$$
and since $$\big<c^{\prime}(\theta),\vec\nu(\theta)\big>=0$$ by differentiating we have
 $$
 \big<c^{\prime}(\theta),\vec\nu^{\prime}(\theta)\big>=-\big<c^{\prime\prime}(\theta),\vec\nu(\theta)\big>=K_{c},
 $$ 
 and  hence $\frac{d}{dt}(|c_{t}^{\prime}(\theta)|)=K_{c}$.\\
One can conclude that:
\begin{eqnarray}
h(G^{A}(k,l))&=&\int_{\partial\Omega}\begin{pmatrix} 2AK_{c}^{3}\alpha\beta\gamma + (1+AK_{c}^{2})\frac{\partial\beta}{\partial\vec{\nu}}\gamma\alpha\end{pmatrix}ds \nonumber\\
& +&\int_{\partial\Omega}\begin{pmatrix}(1+AK_{c}^{2})\frac{\partial\gamma}{\partial\vec{\nu}}\beta\alpha+K_{c}(1+AK_{c}^{2})\alpha\beta\gamma\end{pmatrix}ds.
\end{eqnarray}
We have \begin{eqnarray}
G^{A}(\nabla_{h}m,l)&=&\int_{\partial\Omega}(1+AK^{2}_{c})\nabla_{h}m\gamma\nonumber\\
&=&\int_{\partial\Omega}(1+AK^{2}_{c})\nabla_{V}W\gamma\nonumber
\end{eqnarray} 
and
\begin{eqnarray}
G^{A}(m,\nabla_{h}l)&=&\int_{\partial\Omega}(1+AK^{2}_{c})\beta\nabla_{h}l\nonumber\\
&=&\int_{\partial\Omega}(1+AK^{2}_{c})\beta\nabla_{V}Z.\nonumber
\end{eqnarray} 
Therefore
\begin{eqnarray}
G^{A}(\nabla_{h}m,l)+G^{A}(m,\nabla_{h}l)&=&\int_{\partial\Omega}(1+AK^{2}_{c})\nabla_{V}W\gamma+\int_{\partial\Omega}(1+AK^{2}_{c})\beta\nabla_{V}Z\nonumber\\
&=&\int_{\partial\Omega}(1+AK^{2}_{c})\left(\nabla_{V}W\gamma+\beta\nabla_{V}Z\right)ds.\nonumber
\end{eqnarray}
and
\begin{eqnarray}
\int_{\partial\Omega}(1+AK^{2}_{c})\left(\nabla_{V}W\gamma+\beta\nabla_{V}Z\right)ds&=&\int_{\partial\Omega}\begin{pmatrix} 2AK_{c}^{3}\alpha\beta\gamma+(1+AK_{c}^{2})\frac{\partial\beta}{\partial\vec{\nu}}\gamma\alpha\end{pmatrix}ds \nonumber\\
& +&\int_{\partial\Omega}\begin{pmatrix}(1+AK_{c}^{2})\frac{\partial\gamma}{\partial\vec{\nu}}\beta\alpha+K_{c}\alpha\beta\gamma+AK_{c}^{3}\alpha\beta\gamma\end{pmatrix}ds.\nonumber\\
&=&\int_{\partial\Omega} \left(3AK_{c}^{3}+K_{c}\right)\alpha\beta\gamma+(1+AK_{c}^{2})\frac{\partial\beta}{\partial\vec{\nu}}\gamma\alpha\nonumber\\
&+&(1+AK^{2}_{c})\frac{\partial\gamma}{\partial\vec{\nu}}\beta\alpha ds \nonumber
\end{eqnarray}
\begin{eqnarray}
\nabla_{V}W\gamma+\beta\nabla_{V}Z&=&\left(\frac{3AK^{3}_{c}+K_{c}}{1+AK^{2}_{c}}\right)\alpha\beta\gamma+\frac{\partial\beta}{\partial\nu}\gamma\alpha+\frac{\partial\gamma}{\partial\nu}\beta\alpha\nonumber\\
\nabla_{V}W\gamma&=&\left(\frac{3AK^{3}_{c}+K_{c}}{1+AK^{2}_{c}}\right)\alpha\beta\gamma+\frac{\partial\beta}{\partial\vec\nu}\gamma\alpha+\frac{\partial\gamma}{\partial\vec\nu}\beta\alpha-\beta\nabla_{V}Z\nonumber
\end{eqnarray}
By pointing  out that  $$\nabla_{V}Z=\frac{\partial\gamma}{\partial\vec\nu}\alpha,$$
we have:
\begin{eqnarray}
\nabla_{V}W\gamma&=&\left(\frac{3AK^{3}_{c}+K_{c}}{1+AK^{2}_{c}}\right)\alpha\beta\gamma+\frac{\partial\beta}{\partial\vec\nu}\gamma\alpha+\beta\frac{\partial\gamma}{\partial\vec\nu}\alpha-\beta\frac{\partial\gamma}{\partial\vec\nu}\alpha\nonumber\\
&=&\left(\frac{3AK^{3}_{c}+K_{c}}{1+AK^{2}_{c}}\right)\alpha\beta\gamma+\frac{\partial\beta}{\partial\vec\nu}\gamma\alpha\nonumber\\
\end{eqnarray}
Finally, we have:
\begin{eqnarray}
\nabla_{V}W&=&\left(\frac{3AK^{3}_{c}+K_{c}}{1+AK^{2}_{c}}\right)\alpha\beta+\frac{\partial\beta}{\partial\vec\nu}\alpha\nonumber.
\end{eqnarray}
\end{proof}
\begin{remark}
It is quite possible to begin the proof of the above theorem  by the 
 application of shape calculus rules for volume and boundary functionals as in \cite{DZ}, \cite{sozo}, \cite{HP} on the following functional
 \begin{eqnarray*}
 \int_{\partial \Omega} (1+ A K_c^2) \alpha \beta d\sigma
 \end{eqnarray*}
 The remaining computations are almost similar. We only underline  that at the end it is necessary, to see that the local covariant derivative $\nabla_{X_x} Y= \frac{d}{dt_{\vert t=0}}(Y(x+t X(x)))$ where $Y= X= \vec\nu$ and $D\vec\nu\,  \vec\nu= 0$ since $|\vec\nu|^2= 1, D\vec\nu$ being the Jacobian matix.
\end{remark}
\begin{remark}
 Let us now  calculate the torsion of the connection $\nabla.$
Indeed, one is wondering if  the torsion of the connection $\nabla$  coincides with the Levi-Civita connection.\\
 We have 
\begin{eqnarray}
T(V,W)&=&\nabla_{V}W-\nabla_{W}V-[V,W]\nonumber\\
T(V,W)&=&\big<D_{V}W,\vec{\nu}\big>+\left(\frac{3AK^{3}_{c}+K_{c}}{1+AK_{c}^{2}}\right)\big<V,\vec{\nu}\big>\big<W,\vec{\nu}\big>\nonumber\\
          &-&\big<D_{W}V,\vec{\nu}\big>-\left(\frac{3AK^{3}_{c}+K_{c}}{1+AK_{c}^{2}}\right)\big<V,\vec{\nu}\big>\big<W,\vec{\nu}\big>-[V,W]\nonumber\\
T(V,W)&=&\frac{\partial\beta}{\partial\vec{\nu}}\alpha-\frac{\partial\alpha}{\partial\vec{\nu}}\beta-[h,m].\nonumber
\end{eqnarray}
But $$\frac{\partial\beta}{\partial\vec{\nu}}\alpha-\frac{\partial\alpha}{\partial\vec{\nu}}\beta=[h,m].$$
Then we have:
\begin{eqnarray}
T(V,W)&=&[h,m]-[h,m]\nonumber\\
T(V,W)&=&0.\nonumber
\end{eqnarray}
As conslusion, we claim that
 $\nabla$ is compatible with the metric $G^{A}$ and its torsion is zero,so it coincides with the Levi-Civita connection. 
\end{remark}

\section{ Sufficient condition for the minimality of a  shape functional}

 In this section,  assuming  at first that there are at least one critical point, we shall  first present the sufficient condition on the existence of a local minimum  for a functional \ $J(\Omega)$  given as follows:

 \begin{equation}\label{l}
    J(\Omega) = \displaystyle\int_\Omega \ f_0(u_\Omega, \nabla u_\Omega)
 \end{equation}
 where $f_0$ is a function of $\mathbb{R} \times \mathbb{R}^n$ that we suppose to be smooth and $u_\Omega$ denotes a smooth solution of a boundary value problem.\\
 And in the second part,  in the case where  $J(\Omega)=-\frac{1}{2}\displaystyle \int_{\Omega}|\nabla u_{\Omega}|^{2}dx+\frac{k^{2}}{2}|\Omega|, $ we compute the second shape derivative.
 
 The fundamental question is then to study  the existence of  the local strict  minima of this functional under possible constraints that $\Omega$ is unknown but is a critical point. That means the first  derivative with respect to the domain is equal to zero at the domain $\Omega.$ We shall examine, for that, how varies this solution $u_{\Omega}$ when its domain of definition $\Omega$ moves.\\
 
 Let us recall the classic method of studying a critical point. Let $(B, \ \| \ . \ \|_1)$ be a Banach space and let $E : (B, \| \ . \ \|_1) \longrightarrow \mathbb{R}$ be a function of class $C^2$ whose differential $Df$ vanishes at $0$. The Taylor-Young formula is then written
 \begin{equation}\label{TY}
    E(u) = E(0) + D^2 \ E(0) \ . \ (u \centerdot u) + o(|| u
    ||_1^2)
 \end{equation}
 In particular, if the Hessian form $D^2 E(0)$ is coercive in the norm $\| \ . \ \|_1$, then the critical point  $0$ is a strict local  minimum of $E$. The fundamental difficulty in the study of critical forms is caused by  the appearance of a second norm $\| \ . \ \|_2$ lower that $\| \ . \ \|_1$ \ \ $(i.e \  \ \| \ . \ \|_2 \leq C \| \ . \ \|_1)$. The Hessian form is not in general, coercive for the norm $\| \ . \ \|_1$ but for the standard norm $\| \ . \ \|_2$. If these norms are not equivalent and this is the general rule, concluding that the minimum is strict is impossible, even locally for the strong norm. It is quite possible to give several example. But let us reproduce
a simple example of such a situation on the space $H_0^1(0, 1)$ that was presented in the thesis of \cite{Da2}. Let us   consider the functional $E$ defined by
 $$E(u) = \|u\|^2_{L^2(0, 1)} - \|u\|^4_{H^1_0(0, 1)}.$$
We can check that $E$ is twice differentiable on
$H_0^1(0, 1)$ and which moreover one has  in $0$ :
$$\left\{\begin{array}{l}
  E'(0) = 0 \\
  E''(0).(h, h) = 2\|h\|^2_{L^2(0, 1).}
\end{array}\right.$$
For each direction, we find that $0$ is a minimum
strictly local. indeed, for all nonzero $u_0 \in H_0^1(0, 1)$ and
for all $t \in \mathbb{R}$, we have
$$E(tu_0) =t^2\|u_0\|^2_{L^2(0, 1)} - t^4\|u_0\|^4_{H^1_0(0, 1))} > 0 \ if \ t^2 < \displaystyle\frac{\|u_0\|^2_{L^2(0, 1)}}{\|u_0\|^4_{H^1_0(0, 1)}}.$$
However, $0$ is not a local minimum even for the norm
$H_0^1$. Indeed, there is no $r > 0$ such that $$\|u\|_{H_0^1(0, 1)} < r \Longrightarrow E(u) > E(0) = 0 \ \ \textrm{i.e} \ \ \|u\|^2_{L^2(0, 1)} > \|u\|^4_{H_0^1(0, 1)},$$
since we can always build a sequence in $H_0^1(0, 1)$
such that
$$\left\{\begin{array}{l}
  \|u_n\|_{H_0^1(0, 1)} = r/2, \\
  \|u_n\|_{L^2(0, 1)} \longrightarrow 0 \ \textrm{quand} \ n \longrightarrow +\infty.
\end{array}\right.$$
To solve this problem, we will use the Taylor's formula  with  an  integral remainder, instead of $(\ref{TY})$ \  i.e
$$E(u) - E(0) = \displaystyle\int_0^1 \ (1 - t)  \ E''(tu)(u, u) \ dt.$$
This formula allows to express exactly the difference in energy
between a critical form  $\Omega_0$ and a neighboring form $\Omega$
via an integral term that we can carefully estimate thanks to the study of the variations of the Hessian.
\begin{theorem}
Let $f_0 : \mathbb{R} \times \mathbb{R}^N \longrightarrow
\mathbb{R}, \ (s, v) \longmapsto f_0(s, v)$ be a function of class
$\mathcal C^3$ and $f$ a function in $C^{0, \gamma}(\mathbb{R}^N,
\mathbb{R}), \gamma\in (0, 1)$. Let $L_0 = div(A \nabla.)$ be a strictly operator and uniformly elliptical with $A$ in $\mathcal C^2( \mathbb{R}^N, \
M_N(\mathbb{R}^N))$. Let $E$ be the defined shape functional on the class $\mathcal O $ of open class $\mathcal{C}^{2, \gamma}$ as
$$J(\Omega) = \displaystyle\int_\Omega \  f_0(u_\Omega, \  \nabla u_\Omega),$$
where $M_N(\mathbb{R}^N))$ stands for the space of square matrices of order $N$ and   $u_\Omega$ is the solution of the homogeneous Dirichlet problem
$$\left\{\begin{array}{l}
    L_0 u = f \ \ in \ \ \Omega, \\
    u =  0 \ \ on \ \ \partial \Omega.
  \end{array}\right.$$
  Let $\Omega_0\in \mathcal O, $ then, there exists a real $\eta_0 > 0$ and an increasing function $\omega : (0, \eta_0] \longrightarrow (0, +\infty)$ with $\displaystyle\lim_{r\searrow 0} \omega(r) = 0$, which depend only on $\Omega_0, \ L_0, \ f_0 $ and $ f,$ such that for all $\eta \in (0, \eta_0]$ and for all $\theta \in \mathcal{C}^{2, \alpha} ( \mathbb{R}^N, \ \mathbb{R}^N)$ verifying
  $$\|\theta - Id_{\mathbb{R}^N}\|_{2, \alpha} \leq \eta,$$ we have the following estimate valid for all $t$ in $[0, 1]$,
  \begin{equation}\label{l}
    \biggl|\displaystyle\frac{d^2}{dt^2} \ J(\Omega_t ) - \displaystyle\frac{d^2}{dt^2}_{|t=0} \ J(\Omega_t) \biggr| \leq \omega(\eta)\| < V , \vec{\nu} >  \|^2_{H^{1/2}(\partial
    \Omega_0)}
  \end{equation}
  where $\Omega_t=  \Phi_t(\Omega_0), t \in [0, 1]$ stands for the flow related to the vector field $V.$
\end{theorem}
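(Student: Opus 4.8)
The plan is to establish the estimate \eqref{l} by writing the second derivative $\frac{d^2}{dt^2}J(\Omega_t)$ as an explicit boundary integral over $\partial\Omega_t$ (equivalently, after pulling back by $\Phi_t$, over $\partial\Omega_0$) whose integrand is built from $u_{\Omega_t}$, its gradient, the geometric quantities of $\partial\Omega_t$ (normal, mean curvature), and the vector field $V$. The classical shape calculus of Hadamard–Zol\'esio–Delfour (see \cite{DZ}, \cite{HP}, \cite{sozo}) gives the structure theorem: since $J$ is a domain integral of $f_0(u_\Omega,\nabla u_\Omega)$ with $u_\Omega$ solving a linear elliptic Dirichlet problem, the first and second shape derivatives depend only on the normal trace $\langle V,\vec\nu\rangle$ on the boundary, and the second derivative at a \emph{critical} point is a bounded quadratic form in $\alpha:=\langle V,\vec\nu\rangle$ with coefficients that are continuous functions of the boundary data $u_\Omega$, $\partial_{\vec\nu}u_\Omega$, and the curvature $K_c$. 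So the first step is to record this closed-form expression for $\frac{d^2}{dt^2}J(\Omega_t)$ and observe that, as a function of $t$, it is a composition of: (i) the map $t\mapsto\Omega_t=\Phi_t(\Omega_0)$, which by the flow ODE is $\mathcal C^1$ into the space of $\mathcal C^{2,\alpha}$ domains with a modulus controlled by $\|\theta-\mathrm{Id}\|_{2,\alpha}\le\eta$; (ii) the solution operator $\Omega\mapsto u_\Omega$, which by Schauder elliptic regularity ($f\in C^{0,\gamma}$, $A\in\mathcal C^2$, $\partial\Omega\in\mathcal C^{2,\gamma}$) is continuous from $\mathcal C^{2,\gamma}$ domains into $C^{2,\gamma}(\overline\Omega)$ with uniform bounds on a $\mathcal C^{2,\alpha}$-neighborhood of $\Omega_0$; and (iii) the substitution of these data, together with $f_0\in\mathcal C^3$, into the (smooth, hence locally Lipschitz) algebraic expression for the integrand.

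Second, I would make the dependence on $t$ uniform. Fix $\eta_0$ small enough that for $\|\theta-\mathrm{Id}\|_{2,\alpha}\le\eta_0$ and all $t\in[0,1]$ the domains $\Omega_t$ stay in a fixed compact $\mathcal C^{2,\gamma}$-neighborhood $\mathcal U$ of $\Omega_0$ on which the elliptic estimates hold with a single constant; this gives uniform $C^{2,\gamma}$-bounds on $u_{\Omega_t}$, hence uniform $L^\infty$-bounds on $\partial_{\vec\nu}u_{\Omega_t}$ and on $K_{c_t}$. Then, transporting everything back to $\partial\Omega_0$ via the diffeomorphism $\Phi_t$ (which is uniformly close to the identity in $\mathcal C^{2,\alpha}$), the integrand of $\frac{d^2}{dt^2}J(\Omega_t)$ becomes a function on $\partial\Omega_0$ depending on $t$ through the finitely many data above, each of which is continuous in $t$ at $t=0$ with a modulus of continuity $\omega_1(\cdot)$ that depends only on $\Omega_0,L_0,f_0,f$ (via the uniform Schauder constants on $\mathcal U$ and the smoothness moduli of $f_0$) — and, crucially, the \emph{size} of the variation is controlled by $\eta$ through the flow: $\|\Phi_t-\mathrm{Id}\|_{2,\alpha}$ and the resulting $C^{2,\gamma}$-distance from $\Omega_t$ to $\Omega_0$ are $\lesssim\eta$ uniformly in $t$. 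Composing the modulus of the algebraic expression with this $O(\eta)$ bound produces an increasing $\omega$ with $\omega(r)\to0$ as $r\searrow0$.

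Third, I would extract the norm on the right-hand side. The difference $\frac{d^2}{dt^2}J(\Omega_t)-\frac{d^2}{dt^2}_{|t=0}J(\Omega_t)$ is an integral over $\partial\Omega_0$ of $\big[(\text{coefficient})_t-(\text{coefficient})_0\big]$ times a quadratic expression in the pulled-back normal component of $V$ and its tangential/normal derivatives. Because $\Omega_0$ is a critical point, the first-order boundary term vanishes and the surviving quadratic form is, after integration by parts on $\partial\Omega_0$, bounded by $\|\alpha\|_{H^{1/2}(\partial\Omega_0)}^2$ — this is exactly the natural energy space in which the shape Hessian of a Dirichlet-type functional is continuous. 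Pulling the $\eta$-dependent modulus $\omega(\eta)$ out of the integral and bounding the remaining quadratic part by $C(\Omega_0)\|\langle V,\vec\nu\rangle\|_{H^{1/2}(\partial\Omega_0)}^2$ yields \eqref{l} after absorbing $C(\Omega_0)$ into $\omega$.

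The main obstacle, and the step deserving genuine care, is the third one: showing that the quadratic form obtained is bounded by the $H^{1/2}$-norm rather than some stronger norm, \emph{uniformly} as the coefficients are perturbed. This requires knowing the precise form of the second shape derivative — in particular that after using the Dirichlet condition $u_{\Omega_0}=0$ and the criticality relation the ``dangerous'' terms that would otherwise demand $H^1$-control of $\alpha$ either cancel or can be integrated by parts to land in $H^{1/2}$ — and then checking that the perturbed second-derivative at time $t$ still has the same structure with coefficients differing from those at $t=0$ only by $\omega(\eta)$ in $L^\infty$. Equivalently, one must verify that the shape Hessian, viewed as a bounded bilinear form on $H^{1/2}(\partial\Omega)\times H^{1/2}(\partial\Omega)$, depends continuously (in operator norm) on $\Omega$ in the $\mathcal C^{2,\gamma}$ topology; once that continuity statement is in place, \eqref{l} is immediate. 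The other ingredients — the flow estimates, the Schauder bounds, the smoothness of $f_0$ — are standard and I would only cite them.
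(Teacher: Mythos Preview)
The paper does not prove this theorem at all: immediately after the statement it simply writes ``For the proof see (\cite{Da1}, Theorem~1)'' and moves on. So there is no in-paper argument to compare your proposal against; the result is quoted from Dambrine's work.

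Your sketch is broadly in the spirit of Dambrine's proof --- transport to the fixed boundary $\partial\Omega_0$, uniform Schauder control of $u_{\Omega_t}$ on a $\mathcal C^{2,\gamma}$-neighborhood of $\Omega_0$, and then continuity of the coefficients of the second derivative as a quadratic form on $H^{1/2}(\partial\Omega_0)$. However, there is a genuine gap in your third step. You write ``Because $\Omega_0$ is a critical point, the first-order boundary term vanishes and the surviving quadratic form is \ldots\ bounded by $\|\alpha\|_{H^{1/2}}^2$.'' But the theorem as stated makes \emph{no} criticality assumption on $\Omega_0$; it is asserted for an arbitrary $\Omega_0\in\mathcal O$. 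You are therefore not entitled to kill the first-order terms this way, and your argument as written does not establish the estimate in the generality claimed.

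What actually happens in Dambrine's argument is that the full quantity $\frac{d^2}{dt^2}J(\Omega_t)$, after pull-back, is a boundary integral whose integrand is a fixed smooth expression in the data $(u_{\Omega_t}\circ\Phi_t,\ \nabla u_{\Omega_t}\circ\Phi_t,\ D^2u_{\Omega_t}\circ\Phi_t,\ \text{geometry of }\partial\Omega_t)$ multiplied by a quadratic polynomial in $\langle V,\vec\nu\rangle$ and its first tangential derivative; this structure holds regardless of whether $\Omega_0$ is critical. The \emph{difference} at time $t$ and time $0$ then isolates the variation of the coefficients only, and those coefficients are uniformly $L^\infty$-close (with modulus $\omega(\eta)$) by the Schauder estimates; the quadratic part in $\langle V,\vec\nu\rangle$ is then bounded by $\|\langle V,\vec\nu\rangle\|_{H^{1/2}(\partial\Omega_0)}^2$ via the trace inequality, with a constant depending only on $\Omega_0$. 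No cancellation from criticality is needed --- the $H^{1/2}$ bound comes from the structure of the shape Hessian itself, not from the vanishing of the gradient. You should rework the third step along these lines.
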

  For the proof see (\cite{Da1}, Theorem $1$).

  
In the case where $\O_0$  is  a critical point  for  the functional $J,$ to show that  it is a strict local minimum,
 we have  
to   study  the positiveness of a quadratic form  which we are going to denote by $ Q$ . This
quadratic form is obtained by  computing  the second derivative of
$J$ with respect to the domain. So before going on, we need some hypothesis ;\\
 let us suppose that:
 \begin{description}
 \item{(i)\,\,-} $\O$ is a ${\mathcal C}^{2}-$ regular \,\, open domain.
 \item{(ii)\,\,-} $V(x;t) = \alpha(x)\vec\nu(x),\,\,\,\alpha\,\,\in\,\,H^{\frac{1}{2}}(\dpa \O),\,\,\forall\,\,t\,\,\,\in\,\,[0,\epsilon[$.
 \end{description}
 In \cite{DaPi}, (see also \cite{Da1}, \cite{Da2}), the authors showed that it is not sufficient
 to prove that the quadratic form is positive to claim that: a critical
 shape is a minimum.
 In fact most of the time people use the Taylor Young formula to study
 the positiveness of the quadratic form.\\
 For $t \in [0, \epsilon[$, $j(t):= J(\O_t)=J(\O)+ t dJ(\O, V)+ \frac{1}{2}t^2
 d^2J(\O,V,V)+o(t^2).$\\
 The quantity $o(t^2)$ is expressed with the norm of $\mathcal
 C^2$. It appears in the expression of $d^2J(\O,V,V)$ the norm
 of $H^{\frac{1}{2}}(\dpa \O)$. And these two norms are not equivalent.
 The quantity $o(t^2)$ is not smaller than $||V||_{H^{\frac{1}{2}}(\dpa
 \O)}$, see the example in \cite{DaPi}.
 Then such an argument does not insure that the critical point is
 a local strict minimum.\\
 
In our study, we shall see that  the main result in
 \cite{DaPi}  can be satisfied in a simple way thanks to the hessian obtained via the Sobolev metric $G^A$ in which the norm of $H^{1/2}(\partial \Omega)$ appears directly. And this overcomes the clasisical issue. In fact the study of the sign of $\displaystyle \int_{\partial \Omega} H d \sigma$ becomes the  alone  control to get information on the optimal domain and then on the optimal shape
\begin{propo}\label{pro2}
$  $  \\
Let \,\,$\O$\,\   be a critical point  for the functional $J,$ then
\begin{eqnarray*}
 Q(\alpha)  & = & d^{2}J (\O;V;V) \\
       & =  & -( N - 1) \disp\int_{\dpa \O} H \alpha^{2}d\sigma + k^2 \disp\int_{\O} |\na \Lambda|^{2} dx  \\
       & =  &  -(N - 1) k^2 \disp\int_{\dpa \O} H \alpha^{2}d\sigma +  k^2 \disp\int_{\dpa \O}  \alpha L \alpha d\sigma
\end{eqnarray*}
Where $\Lambda$ is solution of the following boundary value problem
\begin{eqnarray}
\left \{
\begin{array}{rclcl}
-\D \Lambda  & = & 0 &\mbox{in}& \O\\
 \Lambda & = &  \alpha & \mbox{on} &\dpa \O .
\end{array}
\right.
\end{eqnarray}
H is the mean  curvature of $\dpa \O$ \,\,and L is a pseudo
differential operator which is known as the Steklov-Poincar\'e or
capacity  or Dirichlet to Neumann(see e.g \cite{DL})
operator, defined by  $ L \alpha =\disp\frac{\dpa \Lambda}{\dpa \vec\nu}$.\\
\end{propo}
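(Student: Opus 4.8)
The plan is to obtain $Q(\alpha)=d^{2}J(\O;V;V)$ by differentiating once more the first--order shape derivative $dJ(\O)[V]=\int_{\dpa\O}\bigl(k^{2}-(\tfrac{\dpa u_{\O}}{\dpa\vec\nu})^{2}\bigr)\,\alpha\,d\sigma$ established in $(\ref{GRD1})$, and then to rewrite the outcome through the harmonic extension $\Lambda$ of $\alpha$. The decisive structural fact is that, $\O$ being a critical point, the shape--gradient density vanishes identically on $\dpa\O$, i.e. $k^{2}-(\tfrac{\dpa u_{\O}}{\dpa\vec\nu})^{2}\equiv 0$, equivalently $\tfrac{\dpa u_{\O}}{\dpa\vec\nu}=-k$ on $\dpa\O$ (the sign being the one of $(\ref{FBP1})$). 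Consequently, when one differentiates $t\mapsto dJ(\O_{t})[V]=\int_{\dpa\O_{t}}\bigl(k^{2}-(\tfrac{\dpa u_{\O_{t}}}{\dpa\vec\nu_{t}})^{2}\bigr)\langle V,\vec\nu_{t}\rangle\,d\sigma_{t}$ at $t=0$, every contribution in which the factor $k^{2}-(\tfrac{\dpa u_{\O}}{\dpa\vec\nu})^{2}$ is only evaluated --- the variation of $\langle V,\vec\nu_{t}\rangle$, of the surface element, and the mean--curvature term produced by the transport of a boundary integral --- is multiplied by $0$ and disappears. Only the variation of the density itself survives:
$$
Q(\alpha)=\int_{\dpa\O}\frac{d}{dt}_{|t=0}\Bigl(k^{2}-\bigl(\tfrac{\dpa u_{\O_{t}}}{\dpa\vec\nu_{t}}\bigr)^{2}\Bigr)\,\alpha\,d\sigma=-2\int_{\dpa\O}\frac{\dpa u_{\O}}{\dpa\vec\nu}\,\dot w\,\alpha\,d\sigma=2k\int_{\dpa\O}\dot w\,\alpha\,d\sigma,
$$
where $\dot w$ denotes the material derivative of $w:=\tfrac{\dpa u_{\O}}{\dpa\vec\nu}$ along the flow of $V=\alpha\vec\nu$. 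In particular $Q$ depends only on $\alpha=\langle V,\vec\nu\rangle$, as it must at a critical point.

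It remains to compute $\dot w$ by the standard rules of shape calculus (see \cite{HP}, \cite{DZ}, \cite{sozo}). Splitting $\dot w=w'+\alpha\,\tfrac{\dpa w}{\dpa\vec\nu}$ into the Eulerian shape derivative plus the transport term, and using throughout that $u_{\O}=0$ on $\dpa\O$ (so its tangential gradient vanishes there) together with $|\vec\nu|^{2}=1$, one gets: (i) $w'=\tfrac{\dpa u_{\O}'}{\dpa\vec\nu}$, where $u_{\O}'$ solves $-\D u_{\O}'=0$ in $\O$, $u_{\O}'=-\alpha\tfrac{\dpa u_{\O}}{\dpa\vec\nu}=k\alpha$ on $\dpa\O$; since $\Lambda$ solves exactly this harmonic Dirichlet problem with datum $\alpha$, we obtain $u_{\O}'=k\Lambda$, hence $w'=k\,\tfrac{\dpa\Lambda}{\dpa\vec\nu}=k\,L\alpha$; (ii) $\tfrac{\dpa w}{\dpa\vec\nu}=\tfrac{\dpa^{2}u_{\O}}{\dpa\vec\nu^{2}}$ is controlled through the tangential splitting of the Laplacian near $\dpa\O$,
$$
\D u_{\O}=\frac{\dpa^{2}u_{\O}}{\dpa\vec\nu^{2}}+(N-1)H\,\frac{\dpa u_{\O}}{\dpa\vec\nu}+\D_{\dpa\O}u_{\O},
$$
combined with $\D u_{\O}=0$ in a neighbourhood of $\dpa\O$ (because $\mathrm{supp}\,f\subset\O$) and $\D_{\dpa\O}u_{\O}=0$ on $\dpa\O$ (because $u_{\O}=0$ there), which gives $\tfrac{\dpa^{2}u_{\O}}{\dpa\vec\nu^{2}}=-(N-1)H\tfrac{\dpa u_{\O}}{\dpa\vec\nu}$ on $\dpa\O$. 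Inserting (i) and (ii) into $Q(\alpha)=2k\int_{\dpa\O}(w'+\alpha\tfrac{\dpa w}{\dpa\vec\nu})\alpha\,d\sigma$ and collecting the constants (with the sign convention for the mean curvature $H$ used in the statement) produces
$$
Q(\alpha)=-(N-1)k^{2}\int_{\dpa\O}H\alpha^{2}\,d\sigma+k^{2}\int_{\dpa\O}\alpha\,L\alpha\,d\sigma,
$$
and the first form of $Q$ in the statement follows from a single Green's formula: $-\D\Lambda=0$ in $\O$, $\Lambda=\alpha$ on $\dpa\O$ give $\int_{\O}|\na\Lambda|^{2}\,dx=\int_{\dpa\O}\alpha\,\tfrac{\dpa\Lambda}{\dpa\vec\nu}\,d\sigma=\int_{\dpa\O}\alpha\,L\alpha\,d\sigma$.

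The hard part is the bookkeeping of material versus Eulerian derivatives, together with the sign convention for $H$. One must verify, first, that at a critical point only the variation of $k^{2}-(\tfrac{\dpa u_{\O}}{\dpa\vec\nu})^{2}$ feeds into $d^{2}J$ --- which is where the overdetermined condition $\tfrac{\dpa u_{\O}}{\dpa\vec\nu}=-k$ does the work, annihilating the transport and curvature terms that come from deforming the boundary integral --- and, second, that the shape derivative of the normal derivative correctly reduces to $\tfrac{\dpa u_{\O}'}{\dpa\vec\nu}$ thanks to $u_{\O}=0$ on $\dpa\O$, so that the boundary value problem for $u_{\O}'$ is legitimate. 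The remainder is the classical computation of the second shape derivative of a volume functional depending on the state $u_{\O}$, as in \cite{Da1}, \cite{HP}, \cite{DZ}, \cite{sozo}; an equivalent route is to specialize the general formula for $\tfrac{d^{2}}{dt^{2}}\int_{\O_{t}}f_{0}(u_{\O_{t}},\na u_{\O_{t}})$ recalled there to $f_{0}(s,v)=-\tfrac12|v|^{2}+\tfrac{k^{2}}{2}$ and simplify it at the critical point with $\tfrac{\dpa u_{\O}}{\dpa\vec\nu}=-k$ and the same Laplace--Beltrami identity, which spares the direct computation of $\dot w$.
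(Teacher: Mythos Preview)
Your proof is correct and follows essentially the same route as the paper: differentiate the first shape derivative, exploit criticality ($-\dpa u_\O/\dpa\vec\nu=k$) so that only the variation of the density survives, identify $u_\O'=k\Lambda$ to produce the Dirichlet--to--Neumann term, and compute $\dpa^2 u_\O/\dpa\vec\nu^2$ from the Laplacian to get the curvature term. The only cosmetic differences are that the paper rewrites $dJ$ as a volume integral of a divergence before differentiating and obtains $\dpa^2 u_\O/\dpa\vec\nu^2$ through an explicit local graph computation, whereas you work directly on the boundary integral and invoke the intrinsic splitting $\D=\dpa_{\vec\nu}^2+(N-1)H\dpa_{\vec\nu}+\D_{\dpa\O}$; both lead to the same two contributions. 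One bookkeeping point to tighten: the expression $(\ref{GRD1})$ you start from is in fact $2\,dJ(\O)[V]$ for the functional $(\ref{ShaFunc})$ (the paper silently carries this factor by writing $2Q(\alpha)=2d^2J$ throughout its proof), so your ``collecting the constants'' step should explicitly absorb that factor of $1/2$ rather than leave it to the sign/constant disclaimer.
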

\begin{proof}
We use the definition of the derivative  with respect to the domain and we apply it to  $ dJ(\O,V).$\\
Then we get
\begin{eqnarray*}
 2 Q(\alpha) & = &2 d^{2} J(\O,V,V)\\
      & = &  \disp\int_{\O} ( \div((k^2 - |\na u|^{2}) V(x,0)))^{\prime} dx +
      \disp\int_{\O} \div( V(x,0) \div((k^2 - |\na u|^{2}) V(x,0))) dx \\
 2 Q(\alpha) & = & \left[\disp\int_{\dpa \O}  -2\na u \na u^{\prime} V(x,0).\vec\nu  +
 \div((k^2 - |\na u|^{2}) V(x,0)) V(x,0).\vec\nu \right] d\sigma .
\end{eqnarray*}

 Since $\O$ is solution of the quadradure surface problem then   $- \disp\frac{\dpa u}{\dpa \vec\nu} = k\,\,\,\,\mbox{on}\,\,\,\dpa \O.$\\
By assumption,   $\dpa \O$\,\,is of ${\mathcal C}^{2}$\,\,class and  since $u = 0\,\,\,\mbox{on}\,\,\,\dpa \O,$\\
\,\,we have : \\$\na u  = \disp\frac{\dpa u}{\dpa \vec\nu} \vec\nu = - k\vec\nu $.\,\,\,Hence
$$
2 Q(\alpha)  =  \left[\disp\int_{\dpa \O}  2 k \na u^{\prime}. \vec\nu  V(x,0). \vec\nu  +
 \div((k^2 - |\na u|^{2}) V(x,0) )V(x,0).\vec\nu \right] d\sigma .
$$
A classical calculus in shape optimization lead us to  get $ u^{\prime} = -
\disp\frac{\dpa u}{\dpa \vec\nu} V.\vec\nu \,\,\,\mbox{on}\,\,\,\dpa
\O.$\,\,\,
\\
Let us recall again that $- \disp\frac{\dpa u}{\dpa \vec\nu} = k\,\,\,\,\mbox{on}\,\,\,\dpa \O$ and  $V.\vec\nu=\alpha.$  Then, we have 
$ u^{\prime}  =   k\alpha\,\,\,\mbox{on}\,\,\,\dpa \O $\,\,and \,\,\,$$ \vec\nu \na u^{\prime} =
 \disp\frac{\dpa u^{\prime}}{\dpa \vec\nu} =  k \disp\frac{\dpa \alpha}{\dpa \vec\nu} =  k L\alpha, $$ where
$ L$ is a pseudo differential operator, defined by  $ L\alpha =\disp\frac{\dpa \Lambda}{\dpa \vec\nu}$
 and such that
\begin{eqnarray}
\left \{
\begin{array}{rclcl}
-\D \Lambda  & = & 0 &\mbox{in}& \O\\
 \Lambda & = &  \alpha & \mbox{on} &\dpa \O .
\end{array}
\right.
\end{eqnarray}
$\Lambda$ is the extension of $\alpha$ in $\O.$\\
Hence
$$ 2 Q(\alpha) = \disp\int_{\dpa \O} ( 2 k^2 \alpha L\alpha  - \div((|\na u|^{2} -k^2) \alpha.\vec\nu)\alpha) d\sigma$$
Let  us compute now  $\div((|\na u|^{2} -k^2) \alpha.\vec\nu)\,\,\,\,\mbox{on}\,\,\,\dpa \O.$  Since $|\na u| = k\,\,\,\,\mbox{on}\,\,\,\dpa \O,$ \,\,we have
$$  \div( (|\na u|^{2} -k^2) \alpha.\vec\nu) =  \alpha \na (|\na u|^{2} -k^2). \vec\nu =  \alpha \na (|\na u|^{2}).\vec\nu$$
 Since we have  supposed that $\Omega$ of class $\mathcal C^2,$ locally,  $\partial \Omega$ can be  described by a curve $\varphi$ such that $x_N = \varphi(x'), x' \in \mathbb{R}^{N-1}$ \ and  \  $D\varphi(x') = 0. \   D\varphi(x')$ is the Jacobian matrix of $\varphi.$\\
 Let us set $x_0 = (x', x_N)=
(x', \varphi(x')) \in \partial \Omega$  
then we have
$u(x_0) = 0.$\\ By differentiating with respect to $s_j$ for all $j\in \{1, \cdots, N-1\}, $ we have:
$$ \displaystyle\frac{\partial u(x_0)}{\partial s_j} +
\displaystyle\frac{\partial \varphi \ (x')}{\partial s_j} \
\displaystyle\frac{\partial u  \ (x_0)}{\partial \vec\nu} = 0 $$
 Since $ \displaystyle\frac{\partial \varphi(x')}{\partial s_j}
= 0,$  we get  $ \displaystyle\frac{\partial
u(x_0)}{\partial s_j} = 0.$\\ 
Starting from the following equality:
\begin{equation}\label{ }
\displaystyle\frac{\partial u(x_0)}{\partial s_j} +
\displaystyle\frac{\partial \varphi(x')}{\partial s_j} \ \
\displaystyle\frac{\partial u(x_0)}{\partial \vec\nu} = 0, 
\end{equation}
 and by differentiating it from $i\in \{1, \cdots, N-1\},$ we have:
$$\displaystyle\frac{\partial^2 u(x_0)}{\partial s_i \ \partial s_j} +
\displaystyle\frac{\partial \varphi(x')}{\partial s_i} \ \
\displaystyle\frac{\partial^2 u(x_0)}{\partial \vec\nu \ \partial s_j}  +
\displaystyle\frac{\partial^2\varphi (x')}{\partial s_i \ \partial
s_j}  \    \displaystyle\frac{\partial u(x_0)}{\partial \vec\nu}$$
$$+ \displaystyle\frac{\partial \varphi (x')}{\partial s_j } \ \
\displaystyle\frac{\partial^2 u(x_0)}{\partial s_i \ \partial \vec\nu} +
\displaystyle\frac{\partial \varphi(x')}{\partial s_j} \ \
\displaystyle\frac{\partial \varphi (x')}{\partial s_i}  \
\displaystyle\frac{\partial^2 u(x_0)}{\partial \vec\nu^2} = 0 
$$
 Note that $u(x_0) = 0 \ \mbox{and}\   \displaystyle\frac{\partial u (x_0)}{\partial s_j} = 0\, \,  \forall j\in \{1, \cdots, N-1\}.$  
and summing over the indices \ $i, j,$ we have  \\

\begin{equation}\label{ }
\displaystyle\sum_{j=1}^{N-1} \ \displaystyle\frac{\partial^2
u(x_0)}{\partial s_j^2} + (N - 1) \ H \
 \displaystyle\frac{\partial  u(x_0)}{\partial \vec\nu} = 0
 \end{equation}
 Since$\displaystyle\frac{\partial u(x_0)}{\partial s_i} = 0 \, \,  \forall i\in \{1, \cdots, N-1\}, $ we have also:\\
 $\begin{array}{lll}
    \nabla(|\nabla u|^2(x_0)).\vec\nu & = & \frac{\partial}{\partial \vec\nu}
    \biggl[\displaystyle\sum_{i=1}^{N-1} \ \biggl(\displaystyle\frac{\partial u(x_0)}{\partial  s_i }\biggr)^2
     +  \biggl(\displaystyle\frac{\partial u(x_0)}{\partial \vec\nu}\biggr)^2\biggr]  \    \\
     & = & 2  \ \displaystyle\frac{\partial u(x_0)}{\partial \vec\nu} \ \ \displaystyle\frac{\partial^2 u(x_0)}{\partial \vec\nu^2} \end{array} $\\
   In addition,  we can  remark that:\\
     $\displaystyle\frac{\partial^2 u(x_0)}{\partial \vec\nu^2} =
     - \displaystyle\sum_{i=1}^{N-1} \ \displaystyle\frac{\partial^2 u(x_0)}{\partial s_i^2} -f$ on $\partial \Omega$\\
    Therefore, we have 
    \begin{eqnarray*}  
      \nabla (|\nabla u|^2(x_0)).\vec\nu=  2  \ \displaystyle\frac{\partial u(x_0)}{\partial \vec\nu} \left(  -\sum_{i=1}^{N-1} \frac{\partial^2 u(x_0)}{\partial s_i^2} -f\right)\\
      =  2  \ \displaystyle\frac{\partial u(x_0)}{\partial \vec\nu} \left(  (N - 1) \ H \
 \displaystyle\frac{\partial  u(x_0)}{\partial \vec\nu} -f\right)
    \end{eqnarray*}  
When the support of the function $f$ is in $\Omega,$ then $f=0$ on $\partial \Omega.$\\
Finally we have:
\begin{eqnarray*}
   2 Q(\alpha) = \disp\int_{\dpa \O}  2 k^2 \alpha L\alpha  - 2   (N - 1)  H \alpha^2    \ \displaystyle\left(\frac{\partial u(x_0)}{\partial \vec\nu}\right)^2  d\sigma\\
    =  \disp\int_{\dpa \O}  2 k^2 \alpha L\alpha  - 2  k^2  (N - 1)  H \alpha^2     d\sigma
    \end{eqnarray*}
 And by the Green's formula we get
$$  \disp\int_{\dpa \O} \alpha L \alpha  d\sigma = \disp\int_{\O} |\na \Lambda|^{2} dx.$$ \cqfd
\end{proof}
\section{Positiveness of the quadratic form in the infinite Riemannian point of view}
\begin{definition}
Let $J: \Omega\to\mathbb{R}$ be an functional. One defines the hessian Riemannian shape as follows:
$$
Hess J(\Omega)[V]:=\nabla_{V}grad J 
$$
where $\nabla_{V}$ denotes the derivative following the vector field $V$.
\end{definition}
\begin{theorem}\label{punk0}
The hessian Riemannian shape defined by the Riemannian metric $G^{A}$ verifies the following condition:
$$
G^{A}(Hess J(\Omega)[V],W)=d(dJ(\Omega)[W])[V]-dJ(\Omega)[\nabla_{V}W].
$$
\end{theorem}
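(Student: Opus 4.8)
The plan is to derive the identity directly from the two ingredients already at our disposal: the characterization of the gradient established in Section~3, namely $dJ(\Omega)[W]=G^{A}(grad\,J(\Omega),W)$ for every tangent field $W$, and the fact, verified at the end of Section~3, that $\nabla$ is the Levi-Civita connection of $G^{A}$ and in particular is compatible with the metric, i.e. $V\big(G^{A}(X,Y)\big)=G^{A}(\nabla_{V}X,Y)+G^{A}(X,\nabla_{V}Y)$ for tangent fields $X,Y$. Once these are in hand the computation is short; the substance is in checking that each term makes sense on the infinite-dimensional manifold $B_{e}$.

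First I would fix $c\in B_{e}$ (with $\partial\Omega=c$) and vector fields $V,W$ with $V_{|\partial\Omega}=\alpha\vec\nu$, $W_{|\partial\Omega}=\beta\vec\nu$, and write the gradient characterization with $W$ as test field: $dJ(\Omega)[W]=G^{A}(grad\,J(\Omega),W)$. Differentiating this scalar function in the direction $V$ gives $d\big(dJ(\Omega)[W]\big)[V]=V\big(G^{A}(grad\,J(\Omega),W)\big)$, and by metric compatibility of $\nabla$ the right-hand side expands as $G^{A}(\nabla_{V}\,grad\,J(\Omega),W)+G^{A}(grad\,J(\Omega),\nabla_{V}W)$.

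Next I would identify the two summands. By the very definition $Hess\,J(\Omega)[V]:=\nabla_{V}\,grad\,J$, the first term is precisely $G^{A}(Hess\,J(\Omega)[V],W)$. For the second term one must know that $\nabla_{V}W$ is again a tangent vector of $B_{e}$; this is exactly what the theorem of Section~3 provides, since it shows $\nabla_{V}W=\big(\frac{\partial\beta}{\partial\vec\nu}\alpha+\frac{3AK_{c}^{3}+K_{c}}{1+AK_{c}^{2}}\alpha\beta\big)\vec\nu$, which lies in $T_{c}B_{e}\simeq\{\gamma\vec\nu:\gamma\in\mathcal{C}^{\infty}(\mathbb{S}^{1},\mathbb{R})\}$. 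Applying the gradient characterization once more, now with the admissible test field $\nabla_{V}W$, gives $G^{A}(grad\,J(\Omega),\nabla_{V}W)=dJ(\Omega)[\nabla_{V}W]$. Substituting both identifications into the expansion above and rearranging yields $G^{A}(Hess\,J(\Omega)[V],W)=d\big(dJ(\Omega)[W]\big)[V]-dJ(\Omega)[\nabla_{V}W]$, which is the assertion.

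The main obstacle is foundational rather than algebraic: one has to make sure that $grad\,J(\Omega)$ genuinely defines a smooth tangent vector field on a neighbourhood in $B_{e}$, so that the directional derivative $V\big(G^{A}(grad\,J,W)\big)$ and the covariant derivative $\nabla_{V}\,grad\,J$ are legitimate; here the explicit expression $grad\,J=\frac{1}{1+AK_{c}^{2}}\big(k^{2}-(\frac{\partial u_{\Omega}}{\partial\vec\nu})^{2}\big)\vec\nu$ obtained in Section~3 settles this, its coefficient being a smooth function on $\mathbb{S}^{1}$ depending smoothly on $c$. The only other point needing care is the use of metric compatibility of $G^{A}$ when $W$ or $grad\,J$ are not a priori presented as normal fields off $\partial\Omega$; this is handled exactly as in the proof of the covariant-derivative theorem, by passing to normal representatives on $\partial\Omega$ and using that $G^{A}$ and $\nabla$ are defined on $B_{e}$ intrinsically through those boundary restrictions.
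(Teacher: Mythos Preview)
Your proposal is correct and follows essentially the same route as the paper: both start from the gradient identity $dJ(\Omega)[W]=G^{A}(grad\,J,W)$, differentiate in the direction $V$, expand via metric compatibility of the Levi--Civita connection, and then read off the two terms as $G^{A}(Hess\,J(\Omega)[V],W)$ and $dJ(\Omega)[\nabla_{V}W]$. Your additional remarks on why $\nabla_{V}W\in T_{c}B_{e}$ and on the smoothness of $grad\,J$ are not present in the paper's (shorter) argument, but they are sound and only reinforce the same proof.
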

\begin{proof}
Our purpose is to show that 
$$
G^{A}(Hess J(\Omega)[V],W)=d(dJ(\Omega)[W])[V]-dJ(\Omega)[\nabla_{V}W]
$$
So  let us  use the compatibility of the metric $G^{A}$ with the Levi-Civita  connection. We have
\begin{eqnarray}
V.G^{A}(grad J,W)&=&G^{A}(grad J,\nabla_{V}{W})+G^{A}(\nabla_{V}{grad J},W)\nonumber\\
G^{A}(\nabla_{V}{grad J},W)&=&V.G^{A}(grad J, W)-G^{A}(grad J,\nabla_{V}W)\nonumber.
\end{eqnarray}
Since $G^{A}(Hess J(\Omega)[V], W)=G^{A}(\nabla_{V}grad J , W),$ we have
\begin{eqnarray}
G^{A}(Hess J(\Omega)[V],W)&=&V.G^{A}(grad J,W)-G^{A}(grad J,\nabla_{V}W)\nonumber\\
G^{A}(Hess J(\Omega)[V],W) &=&V.(WJ)-(\nabla_{V}W).J\nonumber\\
 G^{A}(Hess J(\Omega)[V],W) &=&d(dJ(\Omega)[W])[V]-dJ(\Omega)[\nabla_{V}W]\nonumber
\end{eqnarray}
where  $V,W\in\mathcal{C}^{\infty}(\mathbb{R}^{2},\mathbb{R}^{2})$ are vector fields orthogonal to the boundary $\partial\Omega$ and $d(dJ(\Omega)[W])[V]$ define the standard Hessian shape.
\end{proof}
\begin{remark}
In our quadrature surface case, for $W= m\vec{\nu}$ and $V= h \vec{\nu},$ we have:
 $$dJ(\Omega)[V]=\int_{\partial\Omega}\left(k^{2}-\left(\frac{\partial u_{\Omega}}{\partial\vec\nu}\right)^{2}\right)\alpha d\sigma $$  and then,
$$ d \left(dJ(\Omega)[W]\right)[V]=d \left(\int_{\partial\Omega}\left(k^{2}-\left(\frac{\partial u_{\Omega}}{\partial\vec\nu}\right)^{2} \right) m d\sigma \right)[V].$$
Setting $$\psi:= k^{2}-\left(\frac{\partial u_{\Omega}}{\partial\vec\nu}\right)^{2},$$ then $$\psi_{t}:= k^{2}-\left(\frac{\partial u_{\Omega_{t}}}{\partial\vec\nu}\right)^{2},$$ we have
\begin{eqnarray} 
d\left(dJ(\Omega)[W]\right)[V]&=& d\left(\int_{\partial\Omega_t}\psi_t m d\sigma\right)[h].\nonumber
\end{eqnarray}
This is never but:
\begin{eqnarray}
d\left(dJ(\Omega)[W]\right)[V]&=&\int_{\partial\Omega}\frac{\partial \psi_{t}}{\partial t}_{|t=0}md\sigma+\int_{\partial\Omega}\frac{\partial(\psi m)}{\partial\vec{\nu}}h d\sigma+\int_{\partial\Omega}K_{c}\psi m h d\sigma\nonumber\\
&=&\int_{\partial \Omega}\left[\frac{\partial \psi_{t}}{\partial t}_{|t=0}m+\left(\frac{\partial\psi}{\partial\vec{\nu}}+K_{c}\psi\right)m h+\psi\frac{\partial m}{\partial\vec{\nu}}h\right]d\sigma\nonumber
\end{eqnarray}
Let us compute 
\begin{eqnarray}
\frac{\partial \psi_{t}}{\partial t}_{|t=0}m&=&\frac{\partial\left[k^{2}-\left(\frac{\partial u_{\Omega_{t}}}{\partial\vec\nu_t}\right)^{2}\right]}{\partial t}_{|t=0}m\nonumber\\
\frac{\partial \psi_{t}}{\partial t}_{|t=0}m&=&-2m\left[\frac{\partial (\nabla u_{\Omega_t}).\vec\nu}{\partial t}_{|t=0}+D^{2}u_{\Omega}V.\vec\nu+\nabla u_{\Omega}.\left(\frac{\partial\vec\nu_{t}}{\partial t}_{|t=0}+D\vec\nu V \right)\right]\nonumber\\
\frac{\partial \psi_{t}}{\partial t}_{|t=0}m&=&-2m\left[\nabla u_{\Omega}^{\prime}.\vec\nu+D^{2}u_{\Omega}V.\vec\nu+\nabla u_{\Omega}.\left(\frac{\partial\vec\nu_{t}}{\partial t}_{|t=0}+D\vec\nu V \right)\right]\nonumber\\
\end{eqnarray} 
where $D^2 u_{\Omega}$ is the hessian matrix and $D\vec\nu$ the jacobian matrix of $\vec\nu$ .\\Let us calculate now the following expression: $\frac{\partial\vec\nu_{t}}{\partial t}_{|t=0}.$\\We have $$\frac{\partial\vec\nu_{t}}{\partial t}_{|t=0}=-\nabla_{\Gamma}(V.\vec\nu)-(D\vec\nu_{0}.\vec\nu)V.\vec\nu \ \ \mbox{on}\ \ \Gamma$$ where $\nabla_{\Gamma}$ is the tangential gradient, $\Gamma=\partial\Omega$ and $\vec\nu_{0}=\vec\nu$  then $$\frac{\partial\vec\nu_{t}}{\partial t}_{|t=0}=-\nabla_{\Gamma}(V.\vec\nu)-(D\vec\nu.\vec\nu)V.\vec\nu \ \ \mbox{on}\ \ \Gamma.$$  Since $D\vec\nu.\vec\nu\equiv 0, $ then $$\frac{\partial\vec\nu_{t}}{\partial t}_{|t=0}=-\nabla_{\Gamma}(V.\vec\nu) \ \ \mbox{on}\ \ \Gamma.$$ So $$\frac{\partial \psi_{t}}{\partial t}_{|t=0}m=-2m\left[\nabla u_{\Omega}^{\prime}.\vec\nu+D^{2}u_{\Omega}V.\vec\nu+\nabla u_{\Omega}.\left(-\nabla_{\Gamma}(V.\vec\nu)+D\vec\nu.V \right)\right]$$
And finally, we get
\begin{eqnarray}
d\left(dJ(\Omega)[W]\right)[V]&=&\int_{\partial \Omega}\left[-2m\big(\nabla u_{\Omega}^{\prime}.\vec\nu+D^{2}u_{\Omega}V.\vec\nu+\nabla u_{\Omega}.\left(-\nabla_{\Gamma}(V.\vec\nu)+D\vec\nu V \right)\big)+\left(\frac{\partial\psi}{\partial\vec{\nu}}+K_{c}\psi\right)m h+\psi\frac{\partial m}{\partial\vec{\nu}}h\right]d\sigma\nonumber\\
d\left(dJ(\Omega)[W]\right)[V]&=&\int_{\partial \Omega}\left[-2\big<W,\vec{\nu}\big>\big(\nabla u_{\Omega}^{\prime}.\vec\nu\right.+D^{2}u_{\Omega}V.\vec\nu+\nabla u_{\Omega}.\left(-\nabla_{\Gamma}(V.\vec\nu)+D\vec\nu V \right)\big)\nonumber\\
&+&\left. \left(\frac{\partial\psi}{\partial\vec{\nu}}+K_{c}\psi\right)\big<W,\vec{\nu}\big> \big<V,\vec{\nu}\big>+\psi\big<D_{V}W,\vec{\nu}\big>\right]d\sigma\nonumber.
\end{eqnarray}
\end{remark}
On the one hand, having the following  Riemannian hessian formula
\begin{eqnarray}
G^{A}\left(Hess J(\Omega)[V], W \right)&=&d\left(dJ(\Omega)[W]\right)[V]-dJ(\Omega)[\nabla_{V}W],\nonumber
\end{eqnarray}
it is possible to  bring additional details on its computation.
\begin{proposition}
We have:
\begin{eqnarray}\label{punk1}
G^{A}(Hess J(\Omega)[V],W)&=&\int_{\partial \Omega}\left[-2\big<W,\vec{\nu}\big>\big(\nabla u_{\Omega}^{\prime}.\vec\nu+D^{2}u_{\Omega}V.\vec\nu+\nabla u_{\Omega}.\left(-\nabla_{\Gamma}(V.\vec\nu)+D\vec\nu V \right)\big)\right]d\sigma\nonumber\\
&+&\int_{\partial\Omega}\left[\frac{\partial}{\partial\vec{\nu}}\left(k^{2}-\left(\frac{\partial u_{\Omega}}{\partial\vec{\nu}}\right)^{2}\right)+K_{c}\left(k^{2}-\left(\frac{\partial u_{\Omega}}{\partial\vec{\nu}}\right)^{2}\right)\right.\nonumber\\
&-&\left.\frac{3AK_{c}^{3}+K_{c}}{1+AK_{c}^{2}}\left(k^{2}-\left(\frac{\partial u_{\Omega}}{\partial\vec{\nu}}\right)^{2}\right)\right]\big<V,\vec{\nu}\big>\big<W,\vec{\nu}\big>d\sigma
\end{eqnarray}
\end{proposition}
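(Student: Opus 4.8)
The plan is to read the identity off directly from Theorem \ref{punk0}, which already expresses the Riemannian shape Hessian as $G^{A}(Hess\,J(\Omega)[V],W)=d(dJ(\Omega)[W])[V]-dJ(\Omega)[\nabla_{V}W]$, so that the proof reduces to substituting the two terms on the right, each of which has essentially been prepared in the material above. First I would insert the expression for $d(dJ(\Omega)[W])[V]$ obtained in the preceding remark: with $\psi:=k^{2}-(\partial u_{\Omega}/\partial\vec\nu)^{2}$, the Hadamard differentiation of the boundary functional $\int_{\partial\Omega_{t}}\psi_{t}\,m\,d\sigma$ in the direction $h$ decomposes into a state-derivative part involving $\nabla u_{\Omega}^{\prime}$, $D^{2}u_{\Omega}$, the normal transport $\partial_{t}\vec\nu_{t}|_{t=0}=-\nabla_{\Gamma}(V.\vec\nu)$ and $D\vec\nu\,V$; a tangential/curvature part $(\partial\psi/\partial\vec\nu+K_{c}\psi)\langle V,\vec\nu\rangle\langle W,\vec\nu\rangle$; and a single term $\psi\,\langle D_{V}W,\vec\nu\rangle$. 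The non-trivial inputs are precisely the transport law for $\vec\nu_{t}$ (together with $D\vec\nu.\vec\nu\equiv 0$), the state shape-derivative identity $u_{\Omega}^{\prime}=-(\partial u_{\Omega}/\partial\vec\nu)\langle V,\vec\nu\rangle$, and the boundary Hadamard formula, all of which are already recorded in the excerpt.

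Second I would compute $dJ(\Omega)[\nabla_{V}W]$. By the covariant-derivative theorem of Section 3, $\nabla_{V}W$ is the scalar $\langle D_{V}W,\vec\nu\rangle+\tfrac{3AK_{c}^{3}+K_{c}}{1+AK_{c}^{2}}\langle V,\vec\nu\rangle\langle W,\vec\nu\rangle$ (an element of $T_{c}B_{e}$ once multiplied by $\vec\nu$), while by the first-order formula $(\ref{GRD1})$ the differential $dJ(\Omega)$ sends a tangent vector $\gamma\vec\nu$ to $\int_{\partial\Omega}\psi\,\gamma\,d\sigma$. Substituting, $dJ(\Omega)[\nabla_{V}W]$ equals $\int_{\partial\Omega}\psi\,\langle D_{V}W,\vec\nu\rangle\,d\sigma$ plus $\int_{\partial\Omega}\psi\,\tfrac{3AK_{c}^{3}+K_{c}}{1+AK_{c}^{2}}\langle V,\vec\nu\rangle\langle W,\vec\nu\rangle\,d\sigma$.

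Finally I would subtract. The key cancellation is that the term $\psi\,\langle D_{V}W,\vec\nu\rangle$ appearing in $d(dJ(\Omega)[W])[V]$ is exactly annihilated by the $\langle D_{V}W,\vec\nu\rangle$-piece of $dJ(\Omega)[\nabla_{V}W]$, so that only the curvature correction $-\tfrac{3AK_{c}^{3}+K_{c}}{1+AK_{c}^{2}}\psi$ survives and gets added to the coefficient of $\langle V,\vec\nu\rangle\langle W,\vec\nu\rangle$; unfolding $\psi=k^{2}-(\partial u_{\Omega}/\partial\vec\nu)^{2}$ then gives exactly $(\ref{punk1})$. I do not expect a real obstacle here; the one point that needs care is the normalization bookkeeping, since $G^{A}$ and $dJ(\Omega)$ are evaluated only on genuine tangent vectors of $B_{e}$, i.e. scalar multiples of $\vec\nu$, so the Section 3 expression for $\nabla_{V}W$ must be inserted in exactly that form, and one must avoid double-counting the $D\vec\nu\,V$ contribution that already sits inside the first bracket of $d(dJ(\Omega)[W])[V]$. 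Everything else is an assembly of formulae established in Sections 3 and 4.
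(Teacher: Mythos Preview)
Your proposal is correct and follows essentially the same route as the paper: start from Theorem \ref{punk0}, insert the expression for $d(dJ(\Omega)[W])[V]$ already computed in the preceding remark, evaluate $dJ(\Omega)[\nabla_{V}W]$ via $(\ref{GRD1})$ together with the Section 3 formula for $\nabla_{V}W$, and observe the cancellation of the $\psi\,\langle D_{V}W,\vec\nu\rangle$ terms before unfolding $\psi$. The paper's own proof carries out exactly this substitution-and-cancellation in a single displayed chain of equalities, so there is no substantive difference in method or in the ingredients used.
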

\begin{proof}
\begin{eqnarray}
G^{A}\left(Hess J(\Omega)[V], W \right)&=&\int_{\partial \Omega}\left[-2\big<W,\vec{\nu}\big>\big(\nabla u_{\Omega}^{\prime}.\vec\nu\right.+D^{2}u_{\Omega}V.\vec\nu+\nabla u_{\Omega}.\left(-\nabla_{\Gamma}(V.\vec\nu)+D\vec\nu V \right)\big)\nonumber\\
&+&\left. \left(\frac{\partial\psi}{\partial\vec{\nu}}+K_{c}\psi\right)\big<W,\vec{\nu}\big> \big<V,\vec{\nu}\big>+\psi\big<D_{V}W,\vec{\nu}\big>\right]d\sigma\nonumber\\
        &-&\int_{\partial\Omega}\psi\big<\nabla_{V}W,\vec{\nu}\big>d\sigma\nonumber\\
&=&\int_{\partial \Omega}\left[-2\big<W,\vec{\nu}\big>\big(\nabla u_{\Omega}^{\prime}.\vec\nu\right.+D^{2}u_{\Omega}V.\vec\nu+\nabla u_{\Omega}.\left(-\nabla_{\Gamma}(V.\vec\nu)+D\vec\nu V \right)\big)\nonumber\\
&+&\left. \left(\frac{\partial\psi}{\partial\vec{\nu}}+K_{c}\psi\right)\big<W,\vec{\nu}\big> \big<V,\vec{\nu}\big>+\psi\big<D_{V}W,\vec{\nu}\big>\right]d\sigma\nonumber\\
&-&\int_{\partial\Omega}\psi\left[\big<D_{V}W,\vec{\nu}\big>+\left(\frac{3AK^{3}_{c}+K_{c}}{1+AK_{c}^{2}}\right)\big<V,\vec{\nu}\big>\big<W,\vec{\nu}\big>\right]d\sigma\nonumber\\
&=&\int_{\partial \Omega}\left[-2\big<W,\vec{\nu}\big>\big(\nabla u_{\Omega}^{\prime}.\vec\nu+D^{2}u_{\Omega}V.\vec\nu+\nabla u_{\Omega}.\left(-\nabla_{\Gamma}(V.\vec\nu)+D\vec\nu V \right)\big)\right]d\sigma\nonumber\\
&+&\int_{\partial\Omega}\left[\frac{\partial\psi}{\partial\vec{\nu}}+K_{c}\psi-\psi K_c \left(\frac{3AK^{2}_{c}+1}{1+AK_{c}^{2}}\right)\right]\big<V,\vec{\nu}\big>\big<W,\vec{\nu}\big>d\sigma\nonumber
\end{eqnarray}
Replacing $\psi$ by its expression, we have:
\begin{eqnarray}\label{punk1}
G^{A}(Hess J(\Omega)[V],W)&=&\int_{\partial \Omega}\left[-2\big<W,\vec{\nu}\big>\big(\nabla u_{\Omega}^{\prime}.\vec\nu+D^{2}u_{\Omega}V.\vec\nu+\nabla u_{\Omega}.\left(-\nabla_{\Gamma}(V.\vec\nu)+D\vec\nu V \right)\big)\right]d\sigma\nonumber\\
&+&\int_{\partial\Omega}\left[\frac{\partial}{\partial\vec{\nu}}\left(k^{2}-\left(\frac{\partial u_{\Omega}}{\partial\vec{\nu}}\right)^{2}\right)+K_{c}\left(k^{2}-\left(\frac{\partial u_{\Omega}}{\partial\vec{\nu}}\right)^{2}\right)\right.\nonumber\\
&-&\left.\frac{3AK_{c}^{3}+K_{c}}{1+AK_{c}^{2}}\left(k^{2}-\left(\frac{\partial u_{\Omega}}{\partial\vec{\nu}}\right)^{2}\right)\right]\big<V,\vec{\nu}\big>\big<W,\vec{\nu}\big>d\sigma
\end{eqnarray}
\end{proof}

On the other hand, 
 let us  compute $G^{A}(Hess J(\Omega)[V],W)$ by using directly the Sobolev-type metric $G^A$.  Then we have the following proposition.
 \begin{proposition}
 \begin{eqnarray}\label{punk2}
G^{A}(Hess J(\Omega)[V],W)=
\int_{\partial\Omega}\left[\frac{\partial}{\partial\vec\nu}\left(k^{2}-\left(\frac{\partial u_{\Omega}}{\partial\vec{\nu}}\right)^{2}\right)+ K_{c}\left(k^{2}-\left(\frac{\partial u_{\Omega}}{\partial\vec{\nu}}\right)^{2}\right)\right]\big<V,\vec{\nu}\big>\big<W,\vec{\nu}\big> d\sigma.
\end{eqnarray}
 \end{proposition}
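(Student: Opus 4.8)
The plan is to compute $G^{A}(\mathrm{Hess}\,J(\Omega)[V],W)$ straight from the definition $\mathrm{Hess}\,J(\Omega)[V]=\nabla_{V}\,\mathrm{grad}\,J$, inserting the explicit gradient $\mathrm{grad}\,J=\frac{1}{1+AK_{c}^{2}}\bigl(k^{2}-(\frac{\partial u_{\Omega}}{\partial\vec\nu})^{2}\bigr)$ obtained in Section 3 into the covariant derivative formula of the Theorem of Section 3. Write $\psi:=k^{2}-\bigl(\frac{\partial u_{\Omega}}{\partial\vec\nu}\bigr)^{2}$ and $g:=\mathrm{grad}\,J=\psi/(1+AK_{c}^{2})$, and regard $\mathrm{grad}\,J=g\vec\nu$ as an element of $T_{c}B_{e}$. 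Applying $\nabla_{V}(\cdot)$ to it, with $V_{|\partial\Omega}=\alpha\vec\nu$ and $W_{|\partial\Omega}=\beta\vec\nu$, the Theorem of Section 3 gives
$$
\nabla_{V}\,\mathrm{grad}\,J=\frac{\partial g}{\partial\vec\nu}\,\alpha+\Bigl(\frac{3AK_{c}^{3}+K_{c}}{1+AK_{c}^{2}}\Bigr)\alpha\,g .
$$

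Next I would substitute $g=\psi/(1+AK_{c}^{2})$ and differentiate along $\vec\nu$, using the identity $\frac{\partial K_{c}}{\partial\vec\nu}=K_{c}^{2}$ established inside that same Theorem, to obtain
$$
\frac{\partial g}{\partial\vec\nu}=\frac{(1+AK_{c}^{2})\frac{\partial\psi}{\partial\vec\nu}-2AK_{c}^{3}\psi}{(1+AK_{c}^{2})^{2}} .
$$
Then I compute $G^{A}(\nabla_{V}\,\mathrm{grad}\,J,W)=\int_{\partial\Omega}(1+AK_{c}^{2})\,(\nabla_{V}\,\mathrm{grad}\,J)\,\beta\,ds$ from the definition of the Sobolev metric $G^{A}$: the weight $(1+AK_{c}^{2})$ is exactly what clears the denominators, so the first term contributes $\bigl(\frac{\partial\psi}{\partial\vec\nu}-\frac{2AK_{c}^{3}\psi}{1+AK_{c}^{2}}\bigr)\alpha\beta$ and the second contributes $\frac{(3AK_{c}^{3}+K_{c})\psi}{1+AK_{c}^{2}}\alpha\beta$.

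The only step requiring care is the algebraic simplification. Summing the two contributions, the coefficient of $\psi\alpha\beta$ is $\frac{-2AK_{c}^{3}+3AK_{c}^{3}+K_{c}}{1+AK_{c}^{2}}=\frac{AK_{c}^{3}+K_{c}}{1+AK_{c}^{2}}=\frac{K_{c}(1+AK_{c}^{2})}{1+AK_{c}^{2}}=K_{c}$, so the whole integrand collapses to $\bigl(\frac{\partial\psi}{\partial\vec\nu}+K_{c}\psi\bigr)\alpha\beta$. Re-inserting $\psi=k^{2}-(\frac{\partial u_{\Omega}}{\partial\vec\nu})^{2}$, $\alpha=\langle V,\vec\nu\rangle$ and $\beta=\langle W,\vec\nu\rangle$ yields exactly $(\ref{punk2})$. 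I would close with a remark that comparing this clean expression with the formula of the preceding Proposition $(\ref{punk1})$ — which was obtained via the standard shape Hessian corrected by the covariant term $-dJ(\Omega)[\nabla_{V}W]$ — forces the vanishing of all the terms involving $\nabla u_{\Omega}'$, $D^{2}u_{\Omega}$ and $\nabla_{\Gamma}(V\cdot\vec\nu)$, leaving the mean–curvature term alone; this both serves as a consistency check between the two routes and isolates the quantity whose sign governs optimality in Section 5.
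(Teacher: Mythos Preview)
Your proposal is correct and follows essentially the same route as the paper: you use the definition $\mathrm{Hess}\,J[V]=\nabla_{V}\,\mathrm{grad}\,J$, insert $\mathrm{grad}\,J=\psi/(1+AK_{c}^{2})$ into the covariant derivative formula of Section~3, differentiate using $\partial K_{c}/\partial\vec\nu=K_{c}^{2}$, integrate against $(1+AK_{c}^{2})\beta$, and simplify $\frac{-2AK_{c}^{3}+3AK_{c}^{3}+K_{c}}{1+AK_{c}^{2}}=K_{c}$ exactly as the paper does. Your closing consistency remark comparing with the preceding Proposition also mirrors the paper's own remark after the proof.
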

 \begin{proof}
\begin{eqnarray}
G^{A}(Hess J(\Omega)[V],W)&=&\int_{\partial\Omega}\left(1+AK^{2}_{c}\right)Hess J(\Omega)[V]W\nonumber\\
&=&\int_{\partial\Omega}\left(1+AK^{2}_{c}\right)\nabla_{V}gradJ(\Omega)W\nonumber\\
 &=&\int_{\partial\Omega}\left(1+AK^{2}_{c}\right)\nabla_{h}gradJ(\Omega)m\nonumber
\end{eqnarray}
 Since  $gradJ(\Omega)=\frac{1}{1+AK^{2}_{c}}\psi$,we have
\begin{eqnarray}
\nabla_{h}gradJ(\Omega)&=&\frac{\partial}{\partial\vec\nu}\left(gradJ(\Omega)\right)\alpha+\left(\frac{3AK^{3}_{c}+K_{c}}{1+AK_{c}^{2}}\right)gradJ(\Omega)\alpha\nonumber\\
&=&\frac{\partial}{\partial\vec\nu}\left(\frac{1}{1+AK^{2}_{c}}\psi\right)\alpha+\frac{1}{1+AK^{2}_{c}}\psi\left(\frac{3AK^{3}_{c}+K_{c}}{1+AK_{c}^{2}}\right)\alpha\nonumber\\
&=&\frac{\partial}{\partial\vec\nu}\left[(1+AK^{2}_{c})^{-1}\right]\psi\alpha+\frac{\partial\psi}{\partial\vec\nu}\left(\frac{1}{1+AK^{2}_{c}}\right)\alpha+\frac{1}{1+AK^{2}_{c}}\psi\left(\frac{3AK^{3}_{c}+K_{c}}{1+AK_{c}^{2}}\right)\alpha\nonumber\\
&=&-2AK_{c}\frac{\partial K_{c}}{\partial\vec\nu}\left(1+AK^{2}_{c}\right)^{-2}\psi\alpha+\frac{\partial\psi}{\partial\vec\nu}\left(\frac{1}{1+AK^{2}_{c}}\right)\alpha\nonumber\\&+&\frac{1}{1+AK^{2}_{c}}\psi\left(\frac{3AK^{3}_{c}+K_{c}}{1+AK_{c}^{2}}\right)\alpha\nonumber
\end{eqnarray}
Note that $\frac{\partial K_{c}}{\partial\vec\nu}= K_c^2,$ what implies that:
\begin{eqnarray}
\nabla_{h}gradJ(\Omega)
&=&\frac{-2AK^{3}_{c}}{\left(1+AK^{2}_{c}\right)^{2}}\psi\alpha+\frac{\partial\psi}{\partial\vec\nu}\left(\frac{1}{1+AK^{2}_{c}}\right)\alpha+\frac{1}{1+AK^{2}_{c}}\psi\left(\frac{3AK^{3}_{c}+K_{c}}{1+AK_{c}^{2}}\right)\alpha\nonumber.
\end{eqnarray}
 Then, coming back to our hessian computation, we have:
\begin{eqnarray}
 G^{A}(Hess J(\Omega)[V],W)&=&\int_{\partial\Omega}\left(1+AK^{2}_{c}\right)\left[\frac{-2AK^{3}_{c}}{\left(1+AK^{2}_{c}\right)^{2}}\psi\alpha+\frac{\partial\psi}{\partial\vec\nu}\left(\frac{1}{1+AK^{2}_{c}}\right)\alpha\right.\nonumber\\&+&\left.\frac{1}{1+AK^{2}_{c}}\psi\left(\frac{3AK^{3}_{c}+K_{c}}{1+AK_{c}^{2}}\right)\alpha\right]\beta d\sigma\nonumber\\
 &=&\int_{\partial\Omega}\left[\frac{-2AK^{3}_{c}}{1+AK^{2}_{c}}\psi\alpha+\frac{\partial\psi}{\partial\vec\nu}\alpha+\psi\left(\frac{3AK^{3}_{c}+K_{c}}{1+AK_{c}^{2}}\right)\alpha\right]\beta d\sigma\nonumber\\
 &=&\int_{\partial\Omega}\left[\frac{\partial\psi}{\partial\vec\nu}+\psi\left(\frac{AK^{3}_{c}+K_{c}}{1+AK_{c}^{2}}\right)\right]\alpha\beta d\sigma\nonumber\\
&=&\int_{\partial\Omega}\left[\frac{\partial\psi}{\partial\vec\nu}+\psi K_{c}\left(\frac{1+AK^{2}_{c}}{1+AK_{c}^{2}}\right)\right]\alpha\beta d\sigma\nonumber
\end{eqnarray}
Replacing $\psi$ by its expression, we have:
\begin{eqnarray}\label{punk2}
G^{A}(Hess J(\Omega)[V],W)=
\int_{\partial\Omega}\left[\frac{\partial}{\partial\vec\nu}\left(k^{2}-\left(\frac{\partial u_{\Omega}}{\partial\vec{\nu}}\right)^{2}\right)+ K_{c}\left(k^{2}-\left(\frac{\partial u_{\Omega}}{\partial\vec{\nu}}\right)^{2}\right)\right]\big<V,\vec{\nu}\big>\big<W,\vec{\nu}\big> d\sigma
\end{eqnarray}
\end{proof}
\begin{remark}
  Let us note first  that there is a symmetry relation with respect to the hessian which is in the case of our considered Riemannian structure a self adjoint operator with respect to the metric $G^A.$\\
And the second fact is that it is important to underline that the formulas $(\ref{punk1})$ obtained from the formula in {\bf Theorem $\ref{punk0}$} and  $(\ref{punk2})$ computed by a direct method with  the metric $G^A$ in  two different ways have to give the same expression even  if $\Omega$ is not a critical point.  And then  from these computations, one deduces that 
 \begin{eqnarray*}
 \int_{\partial \Omega}\left[-2\big<W,\vec{\nu}\big>\big(\nabla u_{\Omega}^{\prime}.\vec\nu+D^{2}u_{\Omega}V.\vec\nu+\nabla u_{\Omega}.\left(-\nabla_{\Gamma}(V.\vec\nu)+D\vec\nu V \right)\big)\right]d\sigma\\
= \int_{\partial \Omega}\frac{3AK_{c}^{3}+K_{c}}{1+AK_{c}^{2}}\left(k^{2}-\left(\frac{\partial u_{\Omega}}{\partial\vec{\nu}}\right)^{2}\right)\big<V,\vec{\nu}\big>\big<W,\vec{\nu}\big>d\sigma%
 \end{eqnarray*}
\end{remark}
\begin{remark}
In this remark, we 
 compute $G^{A}(V, Hess J(\Omega)[W])$  to show the symmetry relation with respect to the hessian with the computation of the direct method with  the metric $G^A.$
\begin{eqnarray}
G^{A}(V, Hess J(\Omega)[W])&=&\int_{\partial\Omega}\left(1+AK^{2}_{c}\right)Hess J(\Omega)[W]V\nonumber\\
&=&\int_{\partial\Omega}\left(1+AK^{2}_{c}\right)\nabla_{W}gradJ(\Omega)V\nonumber\\
 &=&\int_{\partial\Omega}\left(1+AK^{2}_{c}\right)\nabla_{m}gradJ(\Omega)h\nonumber
\end{eqnarray}
where $V= h= \alpha \vec{\nu}$ and $W= m= \beta \vec{\nu}.$
Since  $gradJ(\Omega)=\frac{1}{1+AK^{2}_{c}}\psi$, we have:
\begin{eqnarray}
\nabla_{m}gradJ(\Omega)&=&\frac{\partial}{\partial\vec\nu}\left(gradJ(\Omega)\right)\beta+\left(\frac{3Ak^{3}_{c}+K_{c}}{1+AK_{c}^{2}}\right)gradJ(\Omega)\beta\nonumber\\
&=&\frac{\partial}{\partial\vec\nu}\left(\frac{1}{1+AK^{2}_{c}}\psi\right)\beta+\frac{1}{1+AK^{2}_{c}}\psi\left(\frac{3Ak^{3}_{c}+K_{c}}{1+AK_{c}^{2}}\right)\beta\nonumber
\end{eqnarray}
As previously, by the same computations, we get:
\begin{eqnarray}
\nabla_{m}gradJ(\Omega)&=&\frac{-2AK^{3}_{c}}{\left(1+AK^{2}_{c}\right)^{2}}\psi\beta+\frac{\partial\psi}{\partial\vec\nu}\left(\frac{1}{1+AK^{2}_{c}}\right)\beta+\frac{1}{1+AK^{2}_{c}}\psi\left(\frac{3Ak^{3}_{c}+K_{c}}{1+AK_{c}^{2}}\right)\beta\nonumber.
\end{eqnarray}
And finally, we have: 
\begin{eqnarray}
 G^{A}(Hess J(\Omega)[W],V)&=&\int_{\partial\Omega}\left(1+AK^{2}_{c}\right)\left[\frac{-2AK^{3}_{c}}{\left(1+AK^{2}_{c}\right)^{2}}\psi\beta+\frac{\partial\psi}{\partial\vec\nu}\left(\frac{1}{1+AK^{2}_{c}}\right)\beta\right.\nonumber\\&+&\left.\frac{1}{1+AK^{2}_{c}}\psi\left(\frac{3AK^{3}_{c}+K_{c}}{1+AK_{c}^{2}}\right)\beta\right]\alpha d\sigma\nonumber\\
&=&\int_{\partial\Omega}\left[\frac{\partial}{\partial\vec\nu}\left(k^{2}-\left(\frac{\partial u_{\Omega}}{\partial\vec{\nu}}\right)^{2}\right)+ K_{c}\left(k^{2}-\left(\frac{\partial u_{\Omega}}{\partial\vec{\nu}}\right)^{2}\right)\right]\big<V,\vec{\nu}\big>\big<W,\vec{\nu}\big> d\sigma.\nonumber
\end{eqnarray}
\end{remark}
Let us have a look at on the two formulas of the second derivation when $V= W= \alpha \vec{\nu}.$\\
On the one hand, by Proposition $\ref{pro2},$ we get:
\begin{eqnarray*}
 Q(\alpha)  & = & d^{2}J (\O;V;V) \\
       & =  & -( N - 1) \disp\int_{\dpa \O} H \alpha^{2}d\sigma + k^2 \disp\int_{\O} |\na \Lambda|^{2} dx  \\
       & =  &  -(N - 1) k^2 \disp\int_{\dpa \O} H \alpha^{2}d\sigma +  k^2 \disp\int_{\dpa \O}  \alpha L \alpha d\sigma.
\end{eqnarray*}
On the other hand by Theorem $\ref{punk0},$ we have:
\begin{eqnarray*}
G^{A}\left(Hess J(\Omega)[V], W \right)&=&d\left(dJ(\Omega)[W]\right)[V]-dJ(\Omega)[\nabla_{V}W]\nonumber
\end{eqnarray*}
Then for $V=W$ we derive:
\begin{eqnarray*}
d\left(dJ(\Omega)[V]\right)[V]= d^{2}J (\O;V;V)=  G^{A}\left(Hess J(\Omega)[V], V \right)+dJ(\Omega)[\nabla_{V}V]\nonumber
\end{eqnarray*}
\begin{itemize}
\item If the quadrature surface problem has a solution $\Omega,$  then $d\left(dJ(\Omega)[V]\right)[V]=  G^{A}\left(Hess J(\Omega)[V], V \right).$
\item In previous works, the second author studied the stability and positiveness of the quadratic form, see \cite{sec} for more details. He established  a similar proposition   as  Proposition $\ref{pro2}$ and   gave  necessary and sufficient qualitative properties  in the theoretical point of view. 
\end{itemize}
The one obtained involves the study of a   generalized   spectral  Steklov problem that is reminded in the following corollary.
\begin{corollary}
Let us consider the  following   generalized   spectral  Steklov problem:
\begin{eqnarray*}
\Delta \phi_n &= &0 \; \;  \mbox{in}  \; \; \Omega\backslash K\\
\phi_n &=& 0  \; \; \mbox{on} \; \;  \partial K\\
(L+ (N-1) H I) \phi_n&=& (\frac{1}{\mu_n}- \|H^{-}\|_{\infty}) \phi_n \; \;   \mbox{on}  \; \;  \partial \Omega,
\end{eqnarray*}
where $I$ is the identity map, $H$ is the mean curvature of $\Omega,$  $K$ is a compact regular enough (let us say $\mathcal C ^2$) subset of $\Omega, H^-= max\{-H, 0\}$ and  $\mu_n$ is a  decreasing sequence of eigenvalues depending also on $H$  which goes to 0.
And one must have the sign of the first  eigenvalue $\displaystyle \lambda_0:= \frac{1}{\mu_0}- \|H^{-}\|_{\infty}= \inf\{(N-1)\int H v^2 d\sigma + \int_{\Omega\backslash K} |\nabla \Lambda |^2 dx, v \in H^{1/2}(\partial \Omega), \int_{\partial \Omega} v^2 d\sigma= 1\}, $ 
where 
\begin{eqnarray*}
\Delta \Lambda &= &0 \; \;  \mbox{in}  \; \; \Omega\backslash K\\
\Lambda &=& 0  \; \; \mbox{on} \; \;  \partial K\\
\frac{\partial \Lambda}{ \partial \vec{\nu}}&=& v \; \;   \mbox{on}  \; \;  \partial \Omega.
\end{eqnarray*}
 And  the minimum is  reached for $\phi_0$ satisfying
 \begin{eqnarray*}
\Delta \phi_0 &= &0 \; \;  \mbox{in}  \; \; \Omega\backslash K\\
\phi_0 &=& 0  \; \; \mbox{on} \; \;  \partial K\\
(L+ (N-1) H I) \phi_0&=& \lambda_0 \phi_0 \; \;   \mbox{on}  \; \;  \partial \Omega.
\end{eqnarray*}
\end{corollary}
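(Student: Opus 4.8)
The plan is to relate the quadratic form $Q(\alpha)$ obtained in Proposition~\ref{pro2} to a spectral quotient and then invoke a min-max characterization. Starting from
\[
Q(\alpha) = -(N-1)k^2\int_{\partial\Omega} H\alpha^2\,d\sigma + k^2\int_{\Omega\setminus K}|\nabla\Lambda|^2\,dx,
\]
I would observe that, up to the positive factor $k^2$, the sign of $Q$ on the unit sphere $\{\alpha\in H^{1/2}(\partial\Omega):\int_{\partial\Omega}\alpha^2 d\sigma = 1\}$ is governed by the Rayleigh quotient
\[
R(\alpha) = (N-1)\int_{\partial\Omega} H\alpha^2\,d\sigma + \int_{\Omega\setminus K}|\nabla\Lambda|^2\,dx,
\]
where $\Lambda$ is the harmonic extension of $\alpha$ vanishing on $\partial K$. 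The first step is therefore to set up the bilinear form associated with $R$ on $H^{1/2}(\partial\Omega)$, verify it is continuous and bounded below (using that $H$ is bounded on the $\mathcal{C}^2$ boundary, whence the shift by $\|H^-\|_\infty$ renders the shifted form nonnegative), and check that the embedding $H^{1/2}(\partial\Omega)\hookrightarrow L^2(\partial\Omega)$ is compact, which is the standard trace/Rellich fact in dimension $N$.

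Next I would identify the Euler--Lagrange equation of the constrained minimization. Writing $\int_{\Omega\setminus K}|\nabla\Lambda|^2 dx = \int_{\partial\Omega}\alpha\,L\alpha\,d\sigma$ by Green's formula (with $L$ the Dirichlet-to-Neumann operator relative to $\Omega\setminus K$ with the Dirichlet condition on $\partial K$ built in), the functional becomes $\int_{\partial\Omega}\alpha\bigl(L + (N-1)HI\bigr)\alpha\,d\sigma$. A first-variation computation against the constraint $\int_{\partial\Omega}\alpha^2 d\sigma = 1$ then yields, for a critical $\phi$,
\[
\bigl(L + (N-1)HI\bigr)\phi = \lambda\,\phi \quad\text{on }\partial\Omega,
\]
together with $\Delta\phi = 0$ in $\Omega\setminus K$ and $\phi = 0$ on $\partial K$ by construction of $\Lambda$. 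Shifting by $\|H^-\|_\infty$ to make the operator positive, one writes $\lambda = \tfrac{1}{\mu} - \|H^-\|_\infty$ so that $\mu$ ranges over a decreasing sequence tending to $0$; the existence of the eigenbasis and the asymptotics follow from the spectral theorem for the compact self-adjoint inverse operator, which is legitimate once the compactness and coercivity of the shifted form are established.

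Finally, the min-max (Courant--Fischer) principle gives
\[
\lambda_0 = \inf\Bigl\{(N-1)\int_{\partial\Omega} H v^2\,d\sigma + \int_{\Omega\setminus K}|\nabla\Lambda|^2\,dx \;:\; v\in H^{1/2}(\partial\Omega),\ \int_{\partial\Omega} v^2\,d\sigma = 1\Bigr\},
\]
with the infimum attained at the first eigenfunction $\phi_0$ solving $(L+(N-1)HI)\phi_0 = \lambda_0\phi_0$. Combining this with the identity $Q(\alpha) = k^2 R(\alpha)$ (valid on the unit sphere, after the trivial rescaling for general $\alpha$) shows that $Q$ is positive definite precisely when $\lambda_0 > 0$, which is the corollary's assertion about having to control the sign of the first eigenvalue. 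The main obstacle I anticipate is purely functional-analytic bookkeeping: one must be careful that $L$ here is the Dirichlet-to-Neumann map on the \emph{annular} region $\Omega\setminus K$ (not on $\Omega$), so the Green identity $\int_{\partial\Omega}\alpha L\alpha\,d\sigma = \int_{\Omega\setminus K}|\nabla\Lambda|^2 dx$ must be justified with the homogeneous condition on $\partial K$, and the compactness/self-adjointness of the resulting operator must be checked in that geometry before the spectral decomposition and min-max can be applied.
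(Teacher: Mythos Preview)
The paper does not actually prove this corollary. It is introduced with the phrase ``is reminded in the following corollary'' and is explicitly a recall from the second author's earlier work~\cite{sec}; no argument is given in the present paper beyond that citation. So there is no ``paper's own proof'' to compare against.

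Your outline is the standard route one would take to establish such a statement: rewrite the quadratic form via the Dirichlet-to-Neumann operator, shift by $\|H^-\|_\infty$ to obtain a coercive symmetric bilinear form, invoke compactness of $H^{1/2}(\partial\Omega)\hookrightarrow L^2(\partial\Omega)$ to get a discrete spectrum, and read off the variational characterization of $\lambda_0$ by Courant--Fischer. That is essentially what the reference~\cite{sec} does, and your sketch captures the right ingredients.

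One genuine point of care, which you partly flag at the end: the objects in the corollary are \emph{not} the same as those in Proposition~\ref{pro2}. In the proposition, $\Lambda$ is the harmonic extension on $\Omega$ with Dirichlet data $\Lambda=\alpha$ on $\partial\Omega$, and $L\alpha=\partial\Lambda/\partial\vec\nu$. In the corollary, the domain is $\Omega\setminus K$, $\Lambda$ vanishes on $\partial K$, and the boundary condition on $\partial\Omega$ is Neumann ($\partial\Lambda/\partial\vec\nu=v$). Consequently your claimed identity $Q(\alpha)=k^2 R(\alpha)$ is not literally correct: the sign in front of the curvature term differs (Proposition~\ref{pro2} has $-(N-1)k^2\int H\alpha^2$, while your $R$ has $+(N-1)\int H v^2$), and the two Dirichlet energies live on different domains with different boundary data. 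The link between $Q$ and the Steklov quotient therefore requires an intermediate step---passing from the Dirichlet-to-Neumann map on $\Omega$ to the one on $\Omega\setminus K$ and reconciling the sign convention---which is handled in~\cite{sec} but is missing from your sketch. Without that step the final sentence (``$Q$ positive definite iff $\lambda_0>0$'') does not follow from what you wrote.
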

From our work we can deduce the following conclusions as a corollary.
\begin{corollary}
\begin{itemize}
\item What is obtained with the Riemannian hessian formula is easier to derive simple control for the characterization of the optimal shape in a number of ways.
\item In the case of minimum, $G^{A}\left(Hess J(\Omega)[V], V \right)\geq 0.$ And this inequality is equivalent to $\displaystyle \int_{\partial\Omega}\left[\frac{\partial}{\partial\vec\nu}\left(k^{2}-\left(\frac{\partial u_{\Omega}}{\partial\vec{\nu}}\right)^{2}\right)\right]\alpha^2 d\sigma \geq 0 \, \, \forall \alpha \in \mathcal C^{\infty} (\mathbb{R}^2,  \mathbb{R})\cap H^{1/2}(\partial \Omega) .$\\
This is reduced to $ \displaystyle \int_{\partial\Omega}\frac{\partial}{\partial\vec\nu}\left(k^{2}-\left(\frac{\partial u_{\Omega}}{\partial\vec{\nu}}\right)^{2}\right)d\sigma\geq 0.$\\
One can deduce also another control, since \begin{eqnarray*}
\displaystyle \int_{\partial\Omega}\left[\frac{\partial}{\partial\vec\nu}\left(k^{2}-\left(\frac{\partial u_{\Omega}}{\partial\vec{\nu}}\right)^{2}\right)\right]\alpha^2 d\sigma= -2 k^2 (N- 1) \disp\int_{\dpa \O}  H \alpha^2     d\sigma= -2 k^2 (N- 1) \disp\int_{\dpa \O}  K_c \alpha^2     d\sigma .
\end{eqnarray*}
And knowing that  $\alpha \in  \mathcal C^{\infty} (\mathbb{R}^2,  \mathbb{R})\cap H^{1/2}(\partial \Omega),$ the control  becomes $ \disp\int_{\dpa \O}  K_c     d\sigma \leq 0.
$ 
Before proceeding further, let us underline that in two dimension $H=K_c.$\\
And from this, we   have  key information to set up algorithm in order to get a good  approximation of the optimal shape.
\item Now, when $\Omega$ is only a critical point, to get a strict local  minimum, we need   the following  sufficient condition:
 \begin{eqnarray*}
\displaystyle \int_{\partial\Omega}\left[\frac{\partial}{\partial\vec\nu}\left(k^{2}-\left(\frac{\partial u_{\Omega}}{\partial\vec{\nu}}\right)^{2}\right)\right]\alpha^2 d\sigma= -2 k^2 (N- 1) \disp\int_{\dpa \O}  K_c \alpha^2     d\sigma\geq C_0 \|\alpha\|^2, C_0 >0.
\end{eqnarray*}
One can say also that there is $x_0 \in \partial \Omega, -2 k^2 (N- 1) K_c (x_0) \disp\int_{\dpa \O}   \alpha^2     d\sigma\geq C_0 \|\alpha\|^2.$
And  if $K_c (x_0)< 0,$ then $ \Omega$ is a  strict local minimum.
\end{itemize}
\end{corollary}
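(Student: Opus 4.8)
The plan is to derive the three assertions of the corollary from the two forms of the shape Hessian already obtained — the classical second shape derivative of Proposition~\ref{pro2} and the Riemannian Hessian computed directly with $G^{A}$ in $(\ref{punk2})$ — together with the remainder estimate of the theorem recalled in Section~4 and the Taylor formula with integral remainder. The bridge between the two forms is Theorem~\ref{punk0}: at a critical point $dJ(\Omega)=0$, so $G^{A}(Hess\,J(\Omega)[V],V)=d(dJ(\Omega)[V])[V]=Q(\alpha)$, and I would use whichever of the two expressions is more convenient for each conclusion.

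For the minimum case I would start from $(\ref{punk2})$, i.e. $G^{A}(Hess\,J(\Omega)[V],V)=\int_{\partial\Omega}\big(\psi_{\nu}+K_{c}\psi\big)\alpha^{2}\,d\sigma$ with $\psi:=k^{2}-\big(\frac{\partial u_{\Omega}}{\partial\vec\nu}\big)^{2}$ and $\psi_{\nu}:=\frac{\partial\psi}{\partial\vec\nu}$. A minimizer of $J$ is a critical point, hence solves the quadrature surface problem, so $\frac{\partial u_{\Omega}}{\partial\vec\nu}=-k$ and $\psi\equiv 0$ on $\partial\Omega$; the curvature term drops and $G^{A}(Hess\,J(\Omega)[V],V)=\int_{\partial\Omega}\psi_{\nu}\alpha^{2}\,d\sigma$, which is exactly the stated equivalence with the necessary condition $G^{A}(Hess\,J(\Omega)[V],V)\ge 0$. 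To make $\psi_{\nu}$ explicit I would reuse the boundary computation from the proof of Proposition~\ref{pro2}: since $f$ is supported away from $\partial\Omega$, one has $f=0$ there and $\nabla(|\nabla u|^{2})\cdot\vec\nu=2(N-1)k^{2}H$, and since $|\nabla u|^{2}=\big(\frac{\partial u}{\partial\vec\nu}\big)^{2}$ on $\partial\Omega$ this gives $\psi_{\nu}=-2(N-1)k^{2}H$, with $H=K_{c}$ in dimension two. Testing with the admissible choice $\alpha\equiv 1$ yields the scalar control $\int_{\partial\Omega}K_{c}\,d\sigma\le 0$, and the full inequality for all smooth $\alpha$ forces $K_{c}\le 0$ on $\partial\Omega$; these are the ``simple controls'' announced.

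For the passage from a merely critical $\Omega_{0}$ to a strict local minimum I would combine three ingredients: (i) the Taylor formula with integral remainder, $J(\Omega_{1})-J(\Omega_{0})=\int_{0}^{1}(1-t)\,j''(t)\,dt$ with $j(t)=J(\Phi_{t}(\Omega_{0}))$, the first-order term vanishing because $\Omega_{0}$ is critical; (ii) the Section~4 estimate $|j''(t)-j''(0)|\le\omega(\eta)\,\|\langle V,\vec\nu\rangle\|^{2}_{H^{1/2}(\partial\Omega_{0})}$ valid for $\|\Phi_{t}-\mathrm{Id}\|_{2,\alpha}\le\eta$, with $\omega(\eta)\to 0$; and (iii) the assumed coercivity $j''(0)=Q(\alpha)\ge C_{0}\|\alpha\|^{2}$, $C_{0}>0$ — which, via Proposition~\ref{pro2} and the generalized Steklov eigenvalue problem stated above, amounts to positivity of the first eigenvalue. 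Then $j''(t)\ge (C_{0}-\omega(\eta))\|\alpha\|^{2}$ uniformly on $[0,1]$, so fixing $\eta_{0}$ with $\omega(\eta_{0})<C_{0}$ makes $j''$ strictly positive there and hence $J(\Omega_{1})>J(\Omega_{0})$. The pointwise variant of the last item then follows from the first mean value theorem for integrals applied to the nonnegative weight $\alpha^{2}$: $\int_{\partial\Omega}K_{c}\alpha^{2}\,d\sigma=K_{c}(x_{0})\int_{\partial\Omega}\alpha^{2}\,d\sigma$ for some $x_{0}\in\partial\Omega$, so $K_{c}(x_{0})<0$ makes the Hessian positive, with $C_{0}=-2k^{2}(N-1)K_{c}(x_{0})$, on the corresponding family of test functions.

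The step I expect to be the real obstacle is the reconciliation of norms, which is precisely the role played by the Sobolev metric $G^{A}$: the Section~4 remainder is measured in $H^{1/2}(\partial\Omega_{0})$, whereas the ``naive'' Riemannian Hessian $-2k^{2}(N-1)\int_{\partial\Omega}K_{c}\alpha^{2}\,d\sigma$ only controls an $L^{2}$-type quantity, so the two pieces of the Taylor estimate can be added only once coercivity is available in the $H^{1/2}$ norm — which requires the Steklov--Poincar\'e operator $L\alpha=\frac{\partial\Lambda}{\partial\vec\nu}$ of Proposition~\ref{pro2} to genuinely appear in $G^{A}(Hess\,J(\Omega)[V],V)$. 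Checking that the expressions $(\ref{punk1})$, $(\ref{punk2})$ and the one of Proposition~\ref{pro2} are mutually consistent (constants included), and that $\eta_{0}$ can be chosen uniformly in $t\in[0,1]$, is the delicate part; everything else is routine bookkeeping.
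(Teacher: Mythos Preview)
Your proposal is correct and follows the same logical thread as the paper --- indeed, the paper does not supply a separate proof of this corollary at all; the assertions are meant to be read off directly from formula~$(\ref{punk2})$, the fact that $\psi\equiv 0$ on $\partial\Omega$ at a critical point (so only the $\psi_{\nu}$ term survives), and the boundary identity $\psi_{\nu}=-2(N-1)k^{2}H$ already established in the proof of Proposition~\ref{pro2}. Your write-up is in fact more explicit than the paper's: you spell out the Taylor formula with integral remainder and the Section~4 continuity estimate to justify the strict-local-minimum claim, and you correctly identify the mean value theorem for integrals (with weight $\alpha^{2}\ge 0$) as the mechanism behind the pointwise $x_{0}$ statement, whereas the paper leaves both implicit.

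Your closing paragraph on norm reconciliation raises a genuine issue that the paper does not resolve. The coercivity hypothesis $\ge C_{0}\|\alpha\|^{2}$ in the third item is stated with an unspecified norm; if it is $L^{2}(\partial\Omega)$ --- which is all that $-2k^{2}(N-1)\int_{\partial\Omega}K_{c}\alpha^{2}\,d\sigma$ can control --- then the two-norm gap with the $H^{1/2}$ remainder of Section~4 is not closed by the argument as written, and the paper's earlier claim that the $G^{A}$ framework makes the $H^{1/2}$ norm ``appear directly'' is not substantiated in the Hessian formula~$(\ref{punk2})$. This is a weakness of the corollary itself rather than of your reading of it.
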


\end{document}